\documentclass[11pt]{amsproc} 
\usepackage{geometry}
\usepackage[utf8]{inputenc}
\usepackage{amssymb,amsfonts,amsmath}
\usepackage{tikz,tikz-cd}
\usetikzlibrary{calc,shapes.geometric,arrows,decorations.pathreplacing}
\usepackage[colorinlistoftodos]{todonotes}
\usepackage{enumitem}
\usepackage{hyperref}



\newcommand{\rA}{\mathrm{A}}

\newcommand{\rE}{\mathrm{E}}
\newcommand{\rF}{\mathrm{F}}

\newcommand{\rH}{\mathrm{H}}

\newcommand{\rS}{\mathrm{S}}
\newcommand{\rT}{\mathrm{T}}







\newcommand{\cC}{\mathcal{C}}

\newcommand{\cH}{\mathcal{H}}

\newcommand{\cO}{\mathcal{O}}
\newcommand{\cP}{\mathcal{P}}

\newcommand{\cT}{\mathcal{T}}

\newcommand{\cX}{\mathcal{X}}



\newenvironment{smallpmatrix}{\left(\begin{smallmatrix}}{\end{smallmatrix}\right)}


\newcommand{\N}{\mathbb{N}}
\newcommand{\Q}{\mathbb{Q}}
\newcommand{\R}{\mathbb{R}}
\newcommand{\Z}{\mathbb{Z}}



\DeclareMathOperator{\Aut}{Aut}

\DeclareMathOperator{\Cay}{Cay}

\DeclareMathOperator{\rad}{rad}



\DeclareMathOperator{\GL}{GL}

\DeclareMathOperator{\SO}{SO}
\DeclareMathOperator{\Orth}{O}

\DeclareMathOperator{\PGL}{PGL}

\DeclareMathOperator{\PO}{PO}




\DeclareMathOperator{\Res}{Res}



\usepackage{amsmath}
\swapnumbers 
\newtheoremstyle{mythmstyle}
  {10pt} 
  {10pt} 
  {} 
  {\normalparindent} 
  {\bfseries} 
  {.} 
  {.5em} 
  {} 
\theoremstyle{mythmstyle}
\newtheorem{theorem}{Theorem}[section]
\newtheorem{lemma}[theorem]{Lemma}
\newtheorem{proposition}[theorem]{Proposition}
\newtheorem{corollary}[theorem]{Corollary}

\newtheorem{remark}[theorem]{Remark}

\newtheorem*{theorem*}{Theorem}
\newtheorem*{lemma*}{Lemma}
\newtheorem*{proposition*}{Proposition}
\newtheorem*{corollary*}{Corollary}
\newtheorem*{definition*}{Definition}
\newtheorem*{remark*}{Remark}
\newtheorem*{conjecture*}{Conjecture}
\newtheorem*{example*}{Example}
\newtheorem*{examples*}{Examples}

\makeatletter
\renewcommand\subsubsection{\@startsection{subsubsection}{2}%
  \normalparindent{.5\linespacing\@plus.7\linespacing}{-.5em}%
  {\normalfont\bfseries}}
\makeatother

\geometry{paper=a4paper,margin=1in}

\DeclareMathOperator{\Sph}{\rS}

\newcommand{\HH}{\mathrm{H}}
\newcommand{\NN}{\mathbb{N}}
\DeclareMathOperator{\HR}{\mathrm{HR}}
\DeclareMathOperator{\HRC}{\mathrm{HRC}}

\title[Constructing highly regular expanders from hyperbolic Coxeter groups]{Constructing highly regular expanders \\
from hyperbolic Coxeter groups}

\author{Marston Conder}
\address{M.C. and J.S.:\ Department of Mathematics, University of Auckland,\\ 38 Princes Street, Auckland 1010, New Zealand}
\email{m.conder@auckland.ac.nz,j.schillewaert@auckland.ac.nz}
\thanks{Grant support: M.C.\ by N.Z.\ Marsden Fund (project UOA1626), J.S.\ and M.C.\ by UoA (FRDF grant 3719917 `Geometry and symmetry'), A.L.\ by NSF (grant DMS-1700165) and ERC (Horizon 2020 programme, grant 692854), and F.T.\ by \href{https://www.frs-fnrs.be/en/}{FNRS} (CR FC 4057).}

\author{Alexander Lubotzky}
\address{A.L.:\ Einstein institute of mathematics
Edmund J. Safra campus of the Hebrew University
Givat Ram, Jerusalem 91904
Israel}
\email{alex.lubotzky@mail.huji.ac.il}

\author{Jeroen Schillewaert}

\author{Fran{\c c}ois Thilmany}
\address{F.T.:\ Institut de recherche en math{\' e}matique et physique, Universit{\' e} catholique de Louvain, 2 Chemin du Cyclotron, 1348 Louvain-la-Neuve, Belgique}
\email{francois.thilmany@uclouvain.be}

\dedicatory{Dedicated to John Conway (1937-2020) and Ernest Vinberg (1937-2020), for their phenomenal insights and outstanding contributions in the fields of algebra, combinatorics and geometry}

\date{September 2020}

\begin{document}

\begin{abstract}
A graph $X$ is defined inductively to be $(a_0,\dots,a_{n-1})$-regular if $X$ is $a_0$-regular and for every vertex $v$ of $X$, the sphere of radius $1$ around $v$ is an $(a_1,\dots,a_{n-1})$-regular graph. Such a graph $X$ is said to be highly regular (HR) of level $n$ if $a_{n-1}\neq 0$. 
Chapman, Linial and Peled \cite{ChapmanLinialPeled2019} studied HR-graphs of level $2$ and provided several methods to construct families of graphs which are expanders ``globally and locally''. They ask whether such HR-graphs of level $3$ exist. 

In this paper we show how the theory of Coxeter groups, and abstract regular polytopes and their generalisations, can lead to such graphs. Given a Coxeter system $(W,S)$ and a subset $M$ of $S$, we construct highly regular quotients of the 1-skeleton of the associated Wythoffian polytope $\cP_{W,M}$, which form an infinite family of expander graphs when $(W,S)$ is indefinite and $\cP_{W,M}$ has finite vertex links. 
The regularity of the graphs in this family can be deduced from the Coxeter diagram of $(W,S)$. The expansion stems from applying superapproximation to the congruence subgroups of the linear group $W$. 

This machinery gives a rich collection of families of HR-graphs, with various interesting properties, and in particular answers affirmatively the question asked in \cite{ChapmanLinialPeled2019}.
\end{abstract}

\maketitle

\section{Introduction}\label{sec:introduction}

A graph $X$ is defined inductively to be $(a_0,\dots,a_{n-1})$-regular if $X$ is $a_0$-regular and for every $v$ of $X$, the sphere of radius $1$ around $v$ is $(a_1,\dots,a_{n-1})$-regular. If $a_{n-1}\neq 0$, we will say that $X$ is a highly regular (HR) graph of level $n$. 
A convenient way to visualise such a graph is to think of the $n$-skeleton of the clique complex of $X$. This will be an $n$-dimensional simplicial complex, in which the $1$-skeleton of the link of every $i$-cell $(i=0,\dots,n-2)$ is an $a_{i+1}$-regular graph on $a_i$ vertices. 
If additionally the $1$-skeleta of all these links are connected, we say that $X$ is an $(a_0,\dots,a_{n-1})$-connected regular graph, and that $X$ is connected regular (HRC) of level $n$.

Motivated by questions related to PCP-theory, Chapman, Linial and Peled \cite{ChapmanLinialPeled2019} initiated a systematic study of HR-graphs of level $2$, that is, $(a,b)$-regular graphs. They were mainly interested in such graphs which are expanders ``globally and locally''. This means that the global graph
$X$ is an expander, but so are the links of vertices. Of course, for families of graphs for which the degree $a=a_0$ is constant, this simply means that the links are connected. 
They provided several methods to construct such families and raised the question if this can be done also for some triples $(a,b,c)$. The goal of this paper
is to show that the theory of Coxeter groups and their associated Wythoffian polytopes leads to a rich collection of HR-graphs.

The following theorem summarises our method. All the notions in the theorem will be explained in \S\ref{sec:polytopes-Coxeter-groups}.

\begin{theorem}\label{thm:Main}
Let $(W,S)$ be a Coxeter system, $M$ a subset of $S$ and $\cP_{W,M}$ the associated Wythoffian polytope. Suppose $(W,S)$ is indefinite, $\cP_{W,M}$ has finite vertex links, and the $1$-skeleton $X$ of $\cP_{W,M}$ is $(a_0, \dots, a_n)$-regular. Then there exists an infinite collection of finite quotients of $X$ by normal subgroups of $W$, which form a family of $(a_0, \dots, a_{n})$-regular expander graphs.
\end{theorem}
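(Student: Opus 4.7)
The plan is to realize $W$ linearly via the Tits geometric representation, take congruence subgroups for our normal subgroups $N\trianglelefteq W$, and derive both the local regularity of $N\backslash X$ and its expansion from this linear structure. The key inputs are (i) a sufficiently coprime congruence kernel avoids the finite stabilizers responsible for the local geometry of $\cP_{W,M}$, and (ii) superapproximation provides expansion along the tower of congruence quotients.

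\textbf{Linear realization.} First, I would use the Tits representation to embed $W$ as a discrete subgroup of $\GL_n(\mathcal O_K)$ for $\mathcal O_K$ the ring of integers of a number field $K$ determined by the Coxeter matrix of $(W,S)$ (taking $K=\Q$ whenever all $m_{ij}\in\{2,3,4,6,\infty\}$, and enlarging by cyclotomic integers $2\cos(\pi/m_{ij})$ otherwise; one may then restrict scalars to $\Q$ if needed). The image of $W$ preserves the symmetric bilinear Tits form $B$ associated with $(W,S)$. Because $(W,S)$ is indefinite, $B$ is non-degenerate and indefinite, so the Zariski closure of $W$ in $\GL_n$ contains (in fact equals, after passing to the identity component) the semisimple group $\mathrm{SO}(B)$. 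Since $W$ is moreover Zariski-dense in $\mathrm{O}(B)$, we are precisely in the situation where superapproximation applies.

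\textbf{Congruence subgroups and local regularity.} For ideals $\mathfrak a\trianglelefteq\mathcal O_K$ coprime to a fixed ideal $\mathfrak a_0$, set $W(\mathfrak a):=W\cap\ker\bigl(\GL_n(\mathcal O_K)\to\GL_n(\mathcal O_K/\mathfrak a)\bigr)$; these are finite-index normal subgroups of $W$, giving an infinite family. The vertices of $\cP_{W,M}$ are the cosets $W/W_M$, and the $k$-cells through a fixed vertex correspond to the finite parabolic subgroups $W_{M'}$ with $M\subseteq M'\subseteq S$ of appropriate rank. The hypothesis that $\cP_{W,M}$ has finite vertex links means that $W_M$, hence each of the $W_{M'}$ governing the local structure up to the required depth, is finite. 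Choosing $\mathfrak a_0$ divisible by every prime that divides the order of any element of these finitely many conjugacy classes of finite subgroups, one checks that for $\mathfrak a$ coprime to $\mathfrak a_0$ one has $W(\mathfrak a)\cap wW_{M'}w^{-1}=\{1\}$ for all relevant $w$ and $M'$. Consequently $W(\mathfrak a)$ acts freely on the local combinatorial structure around each vertex of $\cP_{W,M}$, so the quotient $W(\mathfrak a)\backslash X$ inherits the $(a_0,\dots,a_n)$-regularity of $X$.

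\textbf{Expansion and main obstacle.} Finally, expansion follows from superapproximation in the form of Salehi Golsefidy--Varj\'u: the Cayley graphs $\mathrm{Cay}(W/W(\mathfrak a),\bar S)$ form a family of expanders as $\mathfrak a$ ranges over squarefree ideals coprime to $\mathfrak a_0$, because $W$ is finitely generated and its Zariski closure is (perfect and) semisimple. The graph $W(\mathfrak a)\backslash X$ is naturally a Schreier graph on the double cosets $W_M\backslash W/W(\mathfrak a)$; since it arises as a quotient of $\mathrm{Cay}(W/W(\mathfrak a),\bar S)$ by the left action of the bounded finite group $W_M$, a standard Cheeger-constant argument transfers the uniform spectral gap to $W(\mathfrak a)\backslash X$. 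The principal obstacle is verifying that the Tits representation of an \emph{arbitrary} indefinite Coxeter system satisfies the hypotheses of superapproximation --- namely, finding the correct arithmetic model (number field, algebraic group, Zariski-connectedness of the closure) so that the theorem applies uniformly, and ensuring that the coprimality condition on $\mathfrak a$ is compatible with the infinite supply of ideals needed for the expander family. Once these arithmetic details are handled, the rest of the argument is combinatorial and follows the outline above.
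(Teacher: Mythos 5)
Your overall architecture matches the paper's: Tits representation, restriction of scalars to land in $\GL_{N_0}(\Z[1/q_0])$, superapproximation for the congruence quotients of the Cayley graph, and transfer of expansion to $X/N$ through the bounded fibers coming from the finite vertex stabilizer $W_M$ (the paper packages this last step as a quasi-isometry $\Cay(W,S)\to X$ with constants controlled by the diameter of the finite group $\langle S\setminus M\rangle$, plus a general lemma that quasi-isometries of bounded-degree graphs preserve expansion). However, there is a genuine gap in your regularity step. Arranging $W(\fra)\cap wW_{M'}w^{-1}=\{1\}$ only makes the action of $W(\fra)$ on the cells of $\cP_{W,M}$ \emph{free}; it does not prevent a nontrivial $\gamma\in W(\fra)$ from moving a vertex $v$ to a vertex at distance $1$, $2$ or $3$ from $v$ in $X$. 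Such short displacements collapse distinct neighbours of $v$, create loops or multiple edges, or create new triangles and higher cliques in the quotient, all of which destroy the $(a_0,\dots,a_n)$-regularity even though the action is free (compare $\Z$ acting on the line by translation by $3$: free, but the quotient is a triangle). What is actually needed is a lower bound on the minimal displacement of $W(\fra)$ on $X$ (the paper requires displacement $\geq 4$, equivalently that every nontrivial element of the normal subgroup has word length $\geq 5D+4$ where $D$ is the diameter of the vertex link in the Coxeter complex). This is achieved not by a torsion/coprimality argument but by observing that only a \emph{finite} set $T$ of elements of $W$ is too short, and that $T\cap N_m=\{1\}$ for all but finitely many congruence subgroups in the family; that cofiniteness is what keeps the family infinite.

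A secondary issue: your claim that indefiniteness of $(W,S)$ forces $B$ to be non-degenerate is false; an indefinite Tits form can have a nontrivial radical. In that case the Zariski closure is not $\Orth_B$ but the subgroup $\Orth_B^1$ acting trivially on $\rad(B)$ (Benoist--de la Harpe), whose identity component is $\SO_{B'}\ltimes V'^{\dim\rad(B)}$ --- perfect but not semisimple. Superapproximation only needs perfectness of the connected component, so the conclusion survives, but your justification (``semisimple, hence apply superapproximation'') does not cover this case, and you correctly flag but do not resolve exactly this point in your ``main obstacle'' paragraph.
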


Let us say right away that the level of regularity of the graphs in Theorem \ref{thm:Main} is at most the rank of the Coxeter group from which they are derived, and usually it is much smaller.
Moreover, the HR-graphs provided by Theorem \ref{thm:Main} are expanders ``globally'', but their links may be disconnected when those of $\cP_{W,M}$ are. 
Thus Theorem \ref{thm:Main} gives a general scheme to construct highly regular expander graphs, but to apply it, one needs to find Wythoffian polytopes which are sufficiently (connected and) regular. This will be carried out in \S\ref{sec:highly-regular-polytopes}. 

In the particular case where $(W,S)$ is a string Coxeter system and $\cP_{W}=\cP_{W,M}$ is its universal polytope, the connected regularity of the 1-skeleton $X$ follows from that of $\cP_{W}$ itself by a straightforward argument (Lemma \ref{lem:higher-regularity}). 
Thus we obtain the following corollary to Theorem \ref{thm:Main}. 

\begin{corollary} \label{cor:Main-string}
Let $(W,S)$ be a string Coxeter system, and let $\cP_W$ be its universal polytope. Suppose $(W,S)$ is indefinite and $\cP_W$ has finite vertex links. Then there exists an infinite collection of finite quotients of the $1$-skeleton $X$ of $\cP_W$ by normal subgroups of $W$, which form a family of $(a_0, \dots, a_{n-1})$-connected regular expander graphs, where $n$ is the largest integer for which $\cP_W$ has a simplicial $n$-face and $a_i$ is the size of the link of any $i$-face of $\cP_W$ ($0 \leq i \leq n-1$). 
\end{corollary}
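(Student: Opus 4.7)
\textbf{Proof plan for Corollary \ref{cor:Main-string}.}
The plan is to combine Theorem \ref{thm:Main} with Lemma \ref{lem:higher-regularity}, then upgrade regularity to connected regularity in the quotients via a residual finiteness argument. By Lemma \ref{lem:higher-regularity}, the 1-skeleton $X$ of $\cP_W$ is itself $(a_0,\ldots,a_{n-1})$-connected regular, where $n$ is the largest integer for which $\cP_W$ has a simplicial $n$-face and $a_i$ is the size of the link of any $i$-face of $\cP_W$. In particular $X$ is $(a_0,\ldots,a_{n-1})$-regular, so Theorem \ref{thm:Main} produces an infinite family $\{X/N_k\}_{k\in\NN}$ of $(a_0,\ldots,a_{n-1})$-regular expander quotients, with each $N_k \trianglelefteq W$ of finite index. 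What remains is to ensure that, after possibly discarding finitely many $k$, the 1-skeleton of the link of every $i$-cell of $X/N_k$ ($0 \le i \le n-2$) is connected.

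For this step, I would exploit the fact that $W$ is linear (via Tits's geometric representation) and hence residually finite, together with the specific origin of the $N_k$ in Theorem \ref{thm:Main}. The subgroups $N_k$ arise as preimages of principal congruence subgroups of increasing level in an arithmetic quotient of $W$, and such principal congruence subgroups intersect to $\{1\}$. Consequently, for any fixed radius $r$, all but finitely many $N_k$ act freely on the ball of radius $r$ about every vertex of $X$. Taking $r = 2$ suffices: the link of an $i$-cell in the clique complex of $X$ is determined by the combinatorial structure of balls of radius $2$ around vertices of the cell, so once the covering $X \to X/N_k$ is injective on every ball of radius $2$, the link of every $i$-cell in $X/N_k$ is isomorphic to the corresponding link in $X$, and is therefore connected by Lemma \ref{lem:higher-regularity}.

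The main obstacle is ensuring that the expansion property and the bounded-radius injectivity can be arranged simultaneously. This is precisely what the congruence origin of the $N_k$ provides: the superapproximation machinery used in Theorem \ref{thm:Main} delivers a uniform spectral gap across the entire family of congruence quotients, while the chain property of principal congruence subgroups yields the injectivity on bounded balls once $k$ is large enough. Restricting to a cofinal subfamily where both properties hold then produces the required infinite family of $(a_0,\ldots,a_{n-1})$-connected regular expander graphs, completing the proof.
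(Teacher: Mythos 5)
Your proposal is correct and follows essentially the same route as the paper: apply Lemma \ref{lem:higher-regularity} to the universal polytope (a regular polytope) to get that $X$ itself is $(a_0,\dots,a_{n-1})$-connected regular, then invoke Theorem \ref{thm:Main}, noting that the congruence subgroups used there eventually act with large minimal displacement, so that spheres around cliques in $X/N_k$ are isomorphic to those in $X$ and connectivity of links is inherited. The paper leaves this last preservation step implicit in \S\ref{subsec:comparing-regularity}; you merely make it explicit with the bounded-ball injectivity argument.
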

 
The example with the highest level of (connected) regularity that can be constructed directly from Corollary \ref{cor:Main-string} is a family of $(120,12,5,2)$-regular expander graphs, quotients of the 1-skeleton of the hyperbolic tessellation with diagram 
\begin{tikzpicture}[scale=0.7, transform shape]
\tikzstyle{every node}=[draw,solid,draw=black,fill=black,shape=circle,minimum size=.05cm,inner sep=.05cm]
\path[solid,draw=black,fill=white,thin] 
(0,0) node[double] (root 1) {} --++(.5,0) node (root 2) {} --++(.5,0) node (root 3) {} --++(.5,0) node (root 4) {} -- node[draw=none,fill=none,shape=circle,above]{5} ++(.5,0) node (root 5) {};
\end{tikzpicture}
(see \S\ref{subsec:order-5-4-simplex-honeycomb}). This example already answers the question of Chapman, Linial and Peled positively. 

\medskip

Many interesting examples of expanders graphs of connected regularity levels 3 and 4 arise from Coxeter systems affiliated to the exceptional types $\rE$ (see Table \ref{tab:Wythoffian-hyperbolic-honeycombs}). For instance, using Theorem \ref{thm:Main} we construct a family of $(2160,64,21,10)$-connected regular expander graphs as quotients of the 1-skeleton of the Wythoffian polytope with diagram 
\begin{tikzpicture}[scale=0.7, transform shape]
\tikzstyle{every node}=[draw,solid,draw=black,fill=black,shape=circle,minimum size=.05cm,inner sep=.05cm]
\path[solid,draw=black,fill=white,thin] 
(0,0) node[double] (root 1) {} --++(.5,0) node (root 3) {} --++(.5,0) node (root 4) {} --++(.5,0) node (root 5) {} --++(0,.5) node (root 2) {}
(root 5)
\foreach \i in {6,...,9}
{
--++(.5,0) node (root \i) {}
};
\end{tikzpicture}
, whose vertex links are of type $\rE_8$. 
For each $m \geq 10$, we construct another remarkable family of $(2^{m-2}, \frac{(m-1)(m-2)}{2}, 2(m-3))$-connected regular expanders as quotients of the polytope of type $\rE_m$ with diagram 
\begin{tikzpicture}[scale=0.7, transform shape]
\tikzstyle{every node}=[draw,solid,draw=black,fill=black,shape=circle,minimum size=.05cm,inner sep=.05cm]
\path[solid,draw=black,fill=white,thin] 
(0,0) node[double] (root 1) {} --++(.5,0) node (root 3) {} --++(.5,0) node (root 4) {} --++(0,.5) node (root 2) {}
(root 4) --++(.5,0) node (root 5) {};
\path[dash pattern=on 2pt off 1pt,draw=black,fill=white,thin]
(root 5) --++(.5,0) node (root 6) {};
\path[solid,draw=black,fill=white,thin]
(root 6)
\foreach \i in {7,...,7}
{
--++(.5,0) node (root \i) {}
};
\end{tikzpicture}. 
Its vertex links are $(m-1)$-demicubes. 
In fact, these families are respectively $(2160,64,21,10,5)$ and $(2^{m-2}, \frac{(m-1)(m-2)}{2}, 2(m-3),m-3)$-regular, but the last link is disconnected. 

\medskip

The following theorem sums up the most interesting examples (see \S \ref{sec:highly-regular-polytopes} for more).

\begin{theorem}\label{thm:levels} \mbox{}
\begin{enumerate}[label=(\alph*),leftmargin=*,align=left,itemsep=0ex,labelsep=0pt,topsep=0pt]
    \item There are infinitely many $(a_0,a_1,a_2) \in \mathbb{N}^3$ for which there exists an infinite family of $(a_0,a_1,a_2)$-connected regular expanders.
    \item For $(a_0,a_1,a_2,a_3) \in \{(120,12,5,2),(2160,64,21,10)\}$, there exists an infinite family of $(a_0,a_1,a_2,a_3)$-connected regular expanders. 
    \item For each $m \geq 5$, there is an infinite family of $(\binom{2m}{m}, m^2, 2(m-1), m-2, m-3, \dots,1)$-regular expander graphs, for which the spheres around $i$-cliques are connected for $0 \leq i \leq m$, $i \neq 3$. The sphere around a triangle is a disjoint union of two complete graphs on $m$ vertices.
\end{enumerate}
\end{theorem}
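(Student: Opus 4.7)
\emph{The plan} for Theorem~\ref{thm:levels} is to invoke Theorem~\ref{thm:Main} (or Corollary~\ref{cor:Main-string} for string systems) with carefully chosen pairs $(W,S)$ and $M \subseteq S$. In every case (a)--(c) the Coxeter system will be indefinite and the Wythoffian polytope $\cP_{W,M}$ will have finite vertex links, so the existence of the infinite family of expander quotients follows at once; the content lies in the combinatorial verification that the $1$-skeleton $X$ of $\cP_{W,M}$ is $(a_0,\ldots,a_n)$-regular with the prescribed sequence, together with the connectivity analysis of the successive links. These computations are the business of Section~\ref{sec:highly-regular-polytopes}, and the proof of Theorem~\ref{thm:levels} amounts to packaging them.

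For part (a), one exhibits an infinite family of pairwise non-isomorphic indefinite rank-$4$ string Coxeter systems whose universal polytopes have spherical vertex and edge figures (e.g., by varying the edge labels of hyperbolic Schl\"afli symbols $[p,q,r]$, subject to the inequalities that force the proper residues to be spherical while the ambient group remains hyperbolic). Corollary~\ref{cor:Main-string} then produces, for each such system, an infinite family of $(a_0,a_1,a_2)$-connected regular expanders, and varying the diagram yields infinitely many distinct triples.

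Part (b) treats the two explicit four-level sequences. The sequence $(120,12,5,2)$ is realised by the hyperbolic order-$5$ $4$-simplex honeycomb exhibited in the introduction, whose vertex figure is the $600$-cell: the valences in the successive residues ($600$-cell, icosahedron, pentagon, edge) give exactly $120, 12, 5, 2$. The sequence $(2160,64,21,10)$ is realised by the rank-$9$ Wythoffian polytope also exhibited in the introduction, whose vertex link is the $\rE_8$-polytope $4_{21}$, from which the successive residues read off the parameters. For part (c), for each $m \geq 5$ one selects a hyperbolic Coxeter system whose diagram contains an $\rA_{2m-1}$ parabolic with the ring placed on the central node, so that the vertex figure of $\cP_{W,M}$ is a middle-rectified $(2m-1)$-simplex, realising the sequence $\binom{2m}{m}, m^2, 2(m-1), m-2, m-3, \ldots, 1$. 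Connectivity of the link of an $i$-clique for $i \neq 3$ follows from connectivity of the corresponding residual Coxeter subdiagram; the disconnection at $i=3$ is forced by the fact that removing the triangle stabiliser splits the relevant subdiagram into two components of type $\rA_{m-1}$, each contributing one of the two complete-graph pieces.

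The main obstacle will be the combinatorial work underlying (c): identifying the exact hyperbolic diagram and choice of ring for each $m \geq 5$, verifying simultaneously for all such $m$ that the system is indefinite and the vertex links are finite, and tracing through the residue combinatorics at every level in order to extract the full regularity sequence and locate the unique disconnection at triangles. Parts (a) and (b) amount to smaller, finite (or per-parameter) versions of the same bookkeeping. Once these polytopes are in hand, Theorem~\ref{thm:Main} automatically supplies the expansion property of the quotient families.
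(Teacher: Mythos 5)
Your plans for parts (b) and (c) essentially match the paper's proof, but part (a) rests on a construction that does not exist, and this is a genuine gap. There is no infinite family of indefinite rank-$4$ string Coxeter systems to which Corollary \ref{cor:Main-string} applies with $a_2 \neq 0$: to get level-$3$ regularity from $\{p,q,r\}$ the facets must be tetrahedra, forcing $p=q=3$; finiteness of the vertex link $\{q,r\}=\{3,r\}$ then forces $r\in\{3,4,5\}$; and every such $\{3,3,r\}$ is a \emph{definite} system (the $4$-simplex, $4$-orthoplex or $600$-cell), never hyperbolic. Remark \ref{rem:limited-regular-examples} makes the stronger point that among indefinite string systems of \emph{any} rank, $\{3,3,3,5\}$ is the unique one whose quotients are $(a_0,a_1,a_2)$-regular with $a_0\in\N$ and $a_2\neq 0$, so string diagrams supply exactly one triple, not infinitely many. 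The paper instead obtains (a) from non-string Wythoffian families: the polytopes $2_{(m-4)1}$ of type $\rE_m$ ($m\geq 10$) give $(2^{m-2},\tfrac{(m-1)(m-2)}{2},2(m-3))$-connected regular expander families, and the polytopes $\cP_m$ of part (c) give $(\binom{2m}{m},m^2,2(m-1))$-connected regular ones; either family already yields infinitely many distinct triples. (One can also manufacture new admissible triples from old via the tensor-product construction of \S\ref{subsec:combining}, which preserves expansion by Lemma \ref{graphproducts}.)

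For (b) and (c) you identify the right polytopes, with two slips worth fixing. The vertex link of the honeycomb $3_{41}$ is the $2160$-vertex polytope $2_{41}$, not $4_{21}$ (which has only $240$ vertices and would be inconsistent with $a_0=2160$); and in (c) the ring sits on the \emph{new} node attached to the middle of the $\rA_{2m-1}$ diagram, not on the central node of $\rA_{2m-1}$ itself --- it is the vertex figure that carries the ring in the middle. More substantively, your plan to read the whole regularity sequence of $X_m$ off ``residue combinatorics'' of $\cP_m$ cannot work as stated beyond rank $3$, because the $4$-faces of $\cP_m$ are demicubes rather than simplices, so the higher spheres in $X_m$ live in the clique complex and not in the face lattice of the polytope. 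The paper circumvents this by identifying the $1$-skeleton of the vertex link as the Johnson graph $J(2m,m)$ and its successive spheres explicitly as $K_m\,\square\,K_m$, then a disjoint union of two complete graphs (whence the unique disconnection at triangles), then single complete graphs, using the Cartesian-product regularity computation of \S\ref{subsec:combining}.
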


To obtain the arbitrarily high levels of regularity promised in Theorem \ref{thm:levels}(c), it is necessary to consider general Wythoffian polytopes (not just the regular ones). 

The Wythoffian polytopes $\cP_{m}$ we construct to this end are associated, for any $m \geq 5$, with the diagram 
\begin{tikzpicture}[scale=0.7, transform shape]
\tikzstyle{every node}=[draw,solid,draw=black,fill=black,shape=circle,minimum size=.05cm,inner sep=.05cm]
\path[solid,draw=black,fill=white,thin] 
(0,0) node (root 1) {} --++(.5,0) node (root 2) {};
\path[dash pattern=on 2pt off 1pt,draw=black,fill=white,thin]
(root 2) --++(.5,0) node (root 3) {};
\path[solid,draw=black,fill=white,thin] 
(root 3) --++(.5,0) node (root 4) {} --++(.5,0) node (root 5) {};
\path[dash pattern=on 2pt off 1pt,draw=black,fill=white,thin]
(root 5) --++(.5,0) node (root 6) {};
\path[solid,draw=black,fill=white,thin]
(root 6) --++(.5,0) node (root 7) {};
\path[solid,draw=black,fill=white,thin]
(root 4) --++(0,.5) node[double] (root 0) {};
\draw [decorate,decoration={brace,amplitude=.1cm},xshift=0pt,yshift=.1cm]
(-0.1,0) -- node[draw=none,fill=none,shape=ellipse,above]{$m-1$} (1.1,0);
\draw [decorate,decoration={brace,amplitude=.1cm},xshift=0pt,yshift=.1cm]
(1.9,0) -- node[draw=none,fill=none,shape=ellipse,above]{$m-1$} (3.1,0);
\end{tikzpicture}
(obtained by extending the $\rA_{2m-1}$ diagram in its middle by an edge labeled 3 and circling the added vertex). 
The 1-skeleton $X_m$ of $\cP_m$ is a $(\binom{2m}{m}, m^2, 2(m-1), m-2, m-3, \dots,1)$-regular graph, that is, has regularity level $m+1$. 
The link of any vertex in $\cP_m$ is an $m$-rectified $(2m-1)$-simplex, with diagram 
\begin{tikzpicture}[scale=0.7, transform shape]
\tikzstyle{every node}=[draw,solid,draw=black,fill=black,shape=circle,minimum size=.05cm,inner sep=.05cm]
\path[solid,draw=black,fill=white,thin] 
(0,0) node (root 1) {} --++(.5,0) node (root 2) {};
\path[dash pattern=on 2pt off 1pt,draw=black,fill=white,thin]
(root 2) --++(.5,0) node (root 3) {};
\path[solid,draw=black,fill=white,thin] 
(root 3) --++(.5,0) node[double] (root 4) {} --++(.5,0) node (root 5) {};
\path[dash pattern=on 2pt off 1pt,draw=black,fill=white,thin]
(root 5) --++(.5,0) node (root 6) {};
\path[solid,draw=black,fill=white,thin]
(root 6) --++(.5,0) node (root 7) {};
\draw [decorate,decoration={brace,amplitude=.1cm},xshift=0pt,yshift=.1cm]
(-0.1,0) -- node[draw=none,fill=none,shape=ellipse,above]{$m-1$} (1.1,0);
\draw [decorate,decoration={brace,amplitude=.1cm},xshift=0pt,yshift=.1cm]
(1.9,0) -- node[draw=none,fill=none,shape=ellipse,above]{$m-1$} (3.1,0);
\end{tikzpicture}
, and the 1-skeleton of this link (which is also the sphere of radius 1 around any vertex in $X_m$) is the Johnson graph $J(2m,m)$. The associated Coxeter system is indefinite because $m \geq 5$, hence Theorem \ref{thm:levels}(c) follows from applying Theorem \ref{thm:Main} to $\cP_{m}$.

\medskip

It should be pointed out that in order to obtain highly regular expanders from Theorem \ref{thm:Main}, the Wythoffian polytope needs to be chosen very carefully: low-dimensional faces need to be simplices, the link of a vertex needs to be highly regular and finite, while the associated Coxeter system must be indefinite. The fact that these conditions are difficult to satisfy simultaneously makes the polytopes described above (and in \S\ref{sec:highly-regular-polytopes}) very special. 

Another point deserves attention: the clique complexes of the quotient graphs obtained through Theorem \ref{thm:Main} are not quotients of $\cP_{W,M}$ itself, but rather of the subcomplex of $\cP_{W,M}$ consisting of its simplicial faces. 
This is particularly apparent for the polytope $\cP_m$ described above (which, for $m \geq 3$, has $4$-faces which are not simplices yet is regular of level $\geq 4$), and further reflects the subtlety of finding highly regular polytopes to which one can apply Theorem \ref{thm:Main}. 

While the regularity of the 1-skeleton $X$ is obtained from the geometry of $\cP_{W,M}$, proving that the quotient graphs are expander graphs requires arguments of a completely different nature. 
To this end, we use the fact that Coxeter groups are linear, and when $(W,S)$ is indefinite, that they are not virtually solvable. To such linear groups, one may apply the recent \emph{superapproximation} results (cf.~\cite{SalehiGolsefidy2017,SalehiGolsefidy2019}). Superapproximation means that the quotients of the Cayley graph of $(W,S)$ modulo congruence subgroups are expanders. 
These quotients are not highly regular graphs, but they are quasi-isometric to highly regular quotients of $X$ (see Lemma \ref{lem:polytope-quasi-isometric}), from which we deduce (in Proposition \ref{lem:comparing-expansion}) that the latter are also expanders. 
Along the way, we use Proposition \ref{lem:comparing-expansion} to deduce an interesting corollary on high-dimensional expanders which may be of independent interest.

\subsection{Outline}
After a short prologue (\S\ref{sec:expansion-regularity-connectivity}) on expansion, regularity and connectivity of graphs, we start  \S\ref{sec:polytopes-Coxeter-groups} by recalling some basic notions concerning (abstract) polytopes (\S \ref{subsec:polytopes-basic}).
We then discuss the regularity of the $1$-skeleta of polytopes (Lemma \ref{lem:higher-regularity}), followed by a brief introduction to Coxeter systems (\S \ref{subsec:Coxeter-systems}), their geometric representation (\S \ref{subsec:Coxeter-geometric-representation}) and the Wythoffian polytopes associated with them (\S \ref{subsec:Wythoffian-polytopes}). 

\smallskip

In \S \ref{sec:comparing-graphs}, we use the Coxeter complex (\S \ref{subsec:comparing-graphs}) to prove that the Cayley graph of $(W,S)$ and the $1$-skeleton $X$ of the associated polytope $\cP$ are quasi-isometric when $\cP$ has finite vertex links (Lemma \ref{lem:polytope-quasi-isometric}). 
We show how this quasi-isometry and the regularity of $\cP$ can be preserved when passing to finite quotients of $X$ (\S \ref{subsec:comparing-quotients} \& \S \ref{subsec:comparing-regularity}), and prove that the expansion of these quotients amounts to the expansion of the corresponding quotients of the Cayley graph (Proposition \ref{lem:comparing-expansion}).

\smallskip

In \S\ref{sec:density-superapproximation} we invoke the theorem of \cite{BenoistdelaHarpe2004} (Theorem \ref{thm:CoxeterDensity}) determining the Zariski closure of a Coxeter group in its geometric representation, to allow us to apply superapproximation \cite{SalehiGolsefidy2019} (Theorem \ref{thm:Superapproximation}) to indefinite Coxeter groups. 
On the side, we mention an interesting corollary (\ref{cor:superapproximation}) of the superapproximation theorem, of independent interest. 
With this at hand, we conclude the proof of Theorem \ref{thm:Main} in \S \ref{subsec:proof-Main}. 

\smallskip

In \S \ref{sec:HDX}, we present a quick application of Proposition \ref{lem:comparing-expansion} to high-dimensional expanders, of independent interest. We also interpret part of our results in the context of Garland theory. 

\smallskip

In \S \ref{sec:highly-regular-polytopes} we discuss highly regular hyperbolic tessellations. 
We start with regular tessellations (\S \ref{subsec:regular-hyperbolic-tesselations}), record the relevant ones in Table \ref{tab:regular-hyperbolic-honeycombs}, and explain why the list is so short in Remark \ref{rem:limited-regular-examples}. We describe the most noteworthy example, the order-$5$ $4$-simplex honeycomb, in \S \ref{subsec:order-5-4-simplex-honeycomb}. 
We then explain how to find highly regular graphs among tessellations of hyperbolic space by Wythoffian polytopes (\S \ref{subsec:Wythoffian-hyperbolic-tesselations}). We again record the most relevant examples, in Table \ref{tab:Wythoffian-hyperbolic-honeycombs}, and add one with arbitrarily high (but not connected) regularity in \S \ref{subsec:arbitrarily-high-regularity}. 

\smallskip

In \S \ref{sec:degree-parameters}, we use two standard graph product constructions to obtain infinitely many graphs of regularity level $n$ from a given one. The Cheeger-Buser inequalities guarantee that also the expansion property will be preserved (see Lemma \ref{graphproducts}). We discuss some obvious restrictions on regularity parameters in \S \ref{subsec:restrictions} and explain why finding good necessary and sufficient conditions on these parameters is a difficult problem. Finally, we state two open problems on the subject. 

\smallskip

We conclude the paper with a tribute to John Conway and Ernest Vinberg in \S \ref{sec:CV}.

\begin{remark}
While writing this paper, we learned that in a work in preparation \cite{FriedgutIluz2020}, Friedgut and Iluz develop a very different method to produce expanders of arbitrarily high connected regularity level; see \S\ref{subsec:related} for more details.
\end{remark}

\subsection{Acknowledgements}
All four authors thank the Margaret and John Kalman Trust for the financial support of the Michael Erceg Senior Visiting Fellowship at the University of Auckland (UoA) which A.L.\ was awarded. 
A.L.\ and F.T.\ thank UoA for its hospitatility. 
The present work grew out of their visit at UoA. 

We thank Michael Chapman and Ehud Friedgut for useful conversations on the topic of this paper. 

\section{Expansion, regularity and connectivity}\label{sec:expansion-regularity-connectivity}

\subsection{Expander graphs}\label{subsec:expansion}
A finite graph $X$ is said to be \emph{$\epsilon$-expanding}, or an \emph{$\epsilon$-expander}, if its \emph{Cheeger constant}
\[
h(X) = \min_{\emptyset \subsetneq V \subsetneq X}\frac{|\partial V|}{\min\left(|V|, |X \setminus V|\right)}
\]
is at least $\epsilon$. Here $\partial V$ denotes the edge-boundary of a set $V$ of vertices of $X$. 
Of course, any non-trivial finite connected graph $X$ is a $\frac{2}{|X|}$-expander, and a complete graph of any size is a $1$-expander. The relevance of this notion appears when one can bound the Cheeger constant from below independently of the size of the graph, while keeping the degree under control. 
A family $\cX$ of graphs is thus called a \emph{family of ($\epsilon$-)expanders (of degree $a$)} if there exists $\epsilon > 0$ and $a \in \N$ such that each graph $X \in \cX$ has maximum degree at most $a$ and Cheeger constant $h(X) \geq \epsilon$. 
We refer the reader to the survey \cite{HooryLinialWigderson2006} of Hoory, Linial and Widgerson for the history, theory and applications of expander graphs. 

\subsection{Higher regularity} \label{subsec:higher-regularity}
Let $X$ be a graph, and $V$ a set of vertices of $X$. 
(Our notation will not distinguish the graph $X$ and its underlying vertex set; the meaning should be clear from the context.) 
We define $\Sph_X(V)$ as the \emph{sphere of radius $1$ around $V$ in $X$}, that is, the full subgraph of $X$ induced by the set of vertices $\{x \in X \setminus V \mid \textrm{$x$ is adjacent to every vertex of $V$}\}$. 
If $V = \{v\}$ consists of a single vertex $v$, then we also write this as $\Sph_X(v)$, denoting the \emph{(punctured) neighbourhood of $v$ in $X$} in this case. 
Now, let $a_0, \dots, a_n$ be cardinals. Inductively, a graph $X$ is called \emph{$(a_0, \dots, a_n)$-regular} if $X$ is an $a_0$-regular graph and $\Sph_X(v)$ is an $(a_1, \dots, a_n)$-regular graph for every vertex $v$ of $X$. We call $a_i$ the \emph{$i$\textsuperscript{th} regularity degree}, and the largest integer $n+1$ for which $X$ is $(a_0, \dots, a_n)$-regular with $a_n \neq 0$ will be called the \emph{regularity level of $X$}. 

Equivalently, $X$ is $(a_0, \dots, a_n)$-regular if for each $i \in \{0,1,\dots,n\}$, either there is no clique of size $i+2$ in $X$ and $a_i = 0$, or otherwise for every clique $C$ of size $i$ in $X$, the subgraph $\Sph_X(C)$ is $a_{i}$-regular. 
(By convention, the empty graph $\emptyset$ is $0$-regular, but not $d$-regular for any $d \geq 1$, and $\Sph_X(\emptyset) = X$.) 
Next, the \emph{clique complex $X^{\mathrm{cl}}$} of a graph $X$ is the simplicial complex whose $(i-1)$-simplices are the cliques of size $i$ in $X$ ($i \in \N$), with incidence between simplices being given by containment between the corresponding cliques in $X$. The graph $X$ identifies naturally with the $1$-skeleton of $X^{\mathrm{cl}}$ (that is, the graph consisting of all vertices and edges of $X^{\mathrm{cl}}$).
Thus $X$ is an $(a_0, \dots, a_n)$-regular graph if and only if its clique complex $X^{\mathrm{cl}}$ has the following regularity property: for $0 \leq i \leq n$, every $i$-simplex of $X^{\mathrm{cl}}$ is contained in exactly $a_i$ $(i+1)$-simplices; or equivalently (when $n \geq 1$): for $0 \leq i \leq n-1$, the 1-skeleton of the link of any $i$-simplex of $X^{\mathrm{cl}}$ is an $a_{i+1}$-regular graph on $a_i$-vertices. 

\subsection{Connected regularity} \label{subsec:connected-regularity}

In order for a regular graph to have not only global but also local expansion, the links must remain connected. So in a given $(a_0, \dots, a_n)$-regular graph $X$, if for $0 \leq i \leq n$ the sphere around every $i$-clique (or equivalently, the link of every $(i-1)$-simplex in $X^\textrm{cl}$) is connected, we call $X$ \emph{$(a_0, \dots, a_n)$-connected regular}. (In particular, $X$ itself should be connected.)
When there is no need to specify the regularity degrees, we say that $X$ has \emph{connected regularity level} $n+1$, or in short that $X$ is HRC of level $n+1$.

\section{Polytopes and their groups of symmetry}
\label{sec:polytopes-Coxeter-groups}

\subsection{Basics on polytopes} \label{subsec:polytopes-basic}

We begin by recalling (some of) the basic definitions concerning abstract polytopes following \cite{McMullenSchulte2002}. For more on polytopes, reflection groups and an explanation of the terminology used in what follows, we refer the reader to \cite{Coxeter1948}, \cite{McMullenSchulte2002} and \cite{Bourbaki2007}.

\medskip 

An abstract polytope $\cP$ of {\em (finite) rank $n$} is a poset, whose elements are called {\em faces}, satisfying properties (P1)-(P4) below. Two faces $F$, $G$ of $\cP$ are called \emph{incident} if $F\leq G$ or $G\leq F$.
\begin{itemize}
    \item[(P1)] $\cP$ contains a least face $F_{-1}$ and a greatest face $F_n$ (the {\em improper} faces).
    \item[(P2)] Each \emph{flag} (each totally ordered subset of $\cP$ of maximal length) has length $n+1$, that is, contains exactly $n + 2$ faces including $F_{-1}$ and $F_n$.
\end{itemize}

For any two faces $F$ and $G$ with $F\leq G$, we call the poset $G/F:=\{H\in \cP \mid F\leq H \leq G\}$ a \emph{section} of $\cP$.
We will often identify a face $F$ of $\cP$ with the section $F/F_{-1}$.  
The section $F_n/F$ is called the {\em link} of the face $F$.
If the rank of $F/F_{-1}$ is $i$, then $F$ is called an {\em $i$-face}. It is customary to call $0$-faces, $1$-faces and $(n-1)$-faces of $\cP$ respectively vertices, edges and facets. 

A poset $\cP$ satisfying (P1) and (P2) is said to be {\em connected} if either $n\leq 1$, or $n\geq 2$ and for any two proper faces $F$, $G$ there exists a finite sequence of proper faces $F=H_0,H_1,\dots,H_k=G$ such that $H_{i-1}$ and $H_i$ are incident for $i=1,\dots,k$. 

\begin{itemize}
    \item[(P3)] Every section of $\cP$ is connected.
\end{itemize}   

Given a poset $\cP$ with properties (P1) and (P2), two flags of $\mathcal{P}$ are called {\em adjacent} if they differ in exactly one face. 
Then $\mathcal{P}$ is called {\em flag-connected} if any two distinct flags $\Phi$ and $\Psi$ of $\mathcal{P}$ can be joined by a sequence of flags $\Phi=\Phi_0,\Phi_1,\dots,\Phi_{k-1},\Phi_k = \Psi$ such that $\Phi_{j-1}$ and $\Phi_j$ are adjacent for $j=1,\dots,k$. 
If a poset $\cP$ satisfies (P1) and (P2), then (P3) is equivalent to the a priori stronger condition that every section of $\cP$ is flag-connected. 

The last requirement connects abstract polytopes to traditional polytopes.
\begin{itemize}
    \item[(P4)] For each $i \in \{0,\dots,n-1\}$, if $F$ and $G$ are incident faces of $\cP$ of ranks $i-1$ and $i+1$ respectively, then there are precisely two $i$-faces $H$ of $\cP$ such that $F<H<G$.
\end{itemize}
It is an easy exercise to check that sections (in particular, faces and links) of abstract polytopes are again abstract polytopes. 

\begin{remark}
In this paper, we manipulate three different kind of \emph{links}, in three different classes of objects: spheres around cliques in a graph, (simplicial) links around simplices in a simplicial complex, and (polytopal) links around faces in a polytope. 
Even though the first two notions agree when the simplicial complex in question is the clique complex of a graph (cf.~\S \ref{subsec:higher-regularity}), and the last two agree up to a certain rank in a polytope with only simplicial faces up to that rank (as we will use in the proof of Lemma \ref{lem:higher-regularity}), they differ in general. 
For instance, when a polytope $\cP$ has a $2$-face $F$ which is not a triangle, the sphere in the $1$-skeleton of $\cP$ around a vertex $v$ of $F$ is a proper spanning subgraph of the $1$-skeleton of the (polytopal) link of $v$ in $\cP$. 
Care thus has to be taken when discussing links, and the context should make clear which notion of link is involved. 
This is particularly relevant for the second half of \S \ref{sec:highly-regular-polytopes}, where non-regular polytopes are considered. 
\end{remark}

Next, we prove an easy lemma that gives the regularity of the 1-skeleton of (sufficiently) regular polytopes, and then recall some basic facts about Coxeter groups and their associated polytopes. 

\begin{lemma} \label{lem:higher-regularity}
Let $\cP$ be an abstract polytope, and let $X$ denote the $1$-skeleton of $\cP$ (the graph consisting of the vertices and edges of $\cP$). 
Let $k$ be the largest integer for which $\cP$ has a $k$-face which is a simplex, and suppose that $\Aut \cP$ acts transitively on the $i$-faces of $\cP$ for $0 \leq i \leq n$. 
Then $X$ is a $(a_0, \dots, a_{\min(k,n)})$-regular graph, where $a_i$ is the number of simplicial $(i+1)$-faces containing a given $i$-face of $\cP$. 
Moreover, $X$ is $(a_0, \dots, a_{\min(k,n)-1})$-connected regular.
\end{lemma}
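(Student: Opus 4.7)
Setting $\ell = \min(k,n)$, the plan is to first use the transitivity hypotheses to deduce that every face of $\cP$ of rank at most $\ell$ is a simplex (so the numbers $a_i$ are well-defined), then to identify cliques in $X$ with simplicial faces of $\cP$ up to rank $\ell+1$, and finally to deduce both regularity claims by a direct count. For the first point: since $\cP$ has a simplicial $k$-face and every section of a simplex is itself a simplex, $\cP$ contains a simplicial $j$-face for each $0 \le j \le k$; transitivity of $\Aut\cP$ on $j$-faces for $j \le n$ then forces \emph{every} $j$-face with $0 \le j \le \ell$ to be a simplex, and also makes the count of simplicial $(i+1)$-faces containing a given $i$-face ($i \le \ell$) independent of the chosen $i$-face. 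This common value is the asserted $a_i$.

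The heart of the proof is an induction on $j$ showing that for $1 \le j \le \ell+1$, every $j$-clique of $X$ is the vertex set of a unique (necessarily simplicial) $(j-1)$-face of $\cP$. The cases $j=1,2$ are immediate from the definition of $X$ as the $1$-skeleton of $\cP$. For the inductive step, given a $(j+1)$-clique $C$ with $j+1 \le \ell+1$, each of its $j$-subcliques corresponds by induction to a simplicial $(j-1)$-face of $\cP$; I would then combine the diamond condition (P4) applied to chains of adjacent flags passing through these sub-faces with the flag-connectedness (P3) of the relevant sections to exhibit a common $j$-face $G$ of $\cP$ containing them all. Because $j \le \ell$, $G$ is forced to be a simplex, and its vertex set must coincide with $C$; uniqueness then follows from (P4).

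With this clique--face correspondence in hand, both regularity claims reduce to bookkeeping. For any $i$-clique $C$ (with $0 \le i \le \ell$) and any $u \in \Sph_X(C)$, the $(i+2)$-cliques extending $C \cup \{u\}$ biject with the simplicial $(i+1)$-faces of $\cP$ containing the simplicial $i$-face corresponding to $C \cup \{u\}$, so $\Sph_X(C)$ is $a_i$-regular and thus $X$ is $(a_0, \dots, a_\ell)$-regular. For connected regularity, with $|C| = i \le \ell - 1$, $\Sph_X(C)$ identifies with the $1$-skeleton of the polytopal link $F_n/F$ of the $(i-1)$-face $F$ corresponding to $C$; this link is itself an abstract polytope of rank $n - i \ge 2$, so (P3) applied to it forces its $1$-skeleton to be connected. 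Connectedness of $X = \Sph_X(\emptyset)$ itself comes from applying (P3) directly to $\cP$.

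The main obstacle will be the inductive step in the clique--face correspondence: in an arbitrary graph cliques need not correspond to faces of any ambient combinatorial structure, so one must exploit (P4), (P3), and the simpliciality of all low-rank faces in concert to rule out ``accidental'' cliques of $X$ up to rank $\ell+1$. This is also where the bound $\ell = \min(k,n)$ enters essentially, since beyond this rank the faces of $\cP$ need not be simplices and the inductive argument breaks down.
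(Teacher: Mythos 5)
Your overall strategy is the same as the paper's: show that all faces of rank at most $\ell=\min(k,n)$ are simplices, identify the rank-$\le\ell$ part of $\cP$ with the corresponding skeleton of the clique complex $X^{\mathrm{cl}}$, and then obtain both regularity claims from transitivity and flag-connectedness. Your first step (sections of a simplex are simplices, then transitivity) and your final bookkeeping match the paper, and are if anything more carefully justified than in the printed proof.

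The genuine gap sits exactly in the step you yourself flag as the main obstacle: the claim that every $(j+1)$-clique of $X$ with $j\le\ell$ spans a $j$-face. You propose to prove this by induction using the diamond condition (P4) together with flag-connectedness (P3), but no such argument can exist, because the claim is not a consequence of the abstract polytope axioms plus the transitivity hypotheses. The hemi-icosahedron (the antipodal quotient of the icosahedron) is a regular abstract $3$-polytope with $6$ vertices, $15$ edges and $10$ triangular $2$-faces; its $1$-skeleton is $K_6$, which contains $20$ triangles of which only $10$ are faces, and the sphere around a vertex is $K_5$, which is $4$-regular rather than $a_1=2$-regular. So ``accidental'' cliques genuinely occur, and the clique--face correspondence (and with it the lemma as literally stated) fails for this polytope. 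To be fair, the paper's own proof is no stronger at this point: it simply asserts that ``the hull of any clique is an $i$-face'', which is equally unjustified in this generality. What rescues the lemma in all of the paper's applications is that the polytopes there are Wythoffian polytopes with a convex geometric realisation in the Tits cone, where pairwise adjacent vertices are forced to span a face. Some hypothesis of this kind, or a direct verification for the specific polytopes to which the lemma is applied, must be imported into the argument; it cannot be manufactured from (P1)--(P4) and transitivity alone, so your inductive step would fail as proposed.
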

\begin{proof}
Set $k' = \min(k,n)$. By assumption, all $k'$-faces of $\cP$ are simplices, and the $k'$-skeleton of $\cP$ (the poset consisting of the $k'$-faces of $\cP$ and their subfaces) is a simplicial complex. 
This simplicial complex coincides with the $k'$-skeleton of the clique complex $X^{\mathrm{cl}}$ of $X$. 
Indeed, if an $i$-face of $\cP$ is a simplex, then its vertices obviously form a clique in $X$. Conversely, for $i \leq n$, the hull of any clique $\{x_0, \dots, x_i\}$ in $X$ is an $i$-face of $\cP$, which is necessarily the simplex with vertex set $\{x_0, \dots, x_i\}$. 

Using this observation and the transitivity assumption again, we deduce that $\Aut \cP$ acts transitively on the set of $i$-simplices of $X^{\mathrm{cl}}$ for $0 \leq i \leq k'$. Hence, for $0 \leq i \leq k'$, the number $a_i$ of $(i+1)$-simplices of $X^{\mathrm{cl}}$ containing a given $i$-simplex is independent of the choice of the latter. The first part of the lemma thus follows from the discussion in \S\ref{subsec:higher-regularity}, after noting that $a_i$ equals the number of {simplicial} $(i+1)$-faces containing any $i$-face of $\cP$. 

Finally, connectivity of the links of $(i-1)$-faces when all $(i+1)$-faces are simplicial (that is, when $i \leq \min(k,n)-1$) follows from the flag connectivity of $\cP$ (see \S\ref{subsec:polytopes-basic}). 
\end{proof}

Lemma \ref{lem:higher-regularity} obviously applies to \emph{regular polytopes} (whose automorphism group acts transitively on all flags) and to {\em chiral polytopes} (which are maximally symmetric by rotations, but admit no reflections). If $\cP$ is regular, $n$ can be taken to be the rank of $\cP$, so that $\min(k,n) = k$. Moreover, $a_k = 0$ and $a_{k-1} \geq 1$ by definition of $k$. If $\cP$ is chiral, then $n$ can be taken to be the rank of $\cP$ minus 1, so that again $\min(k,n) = k$.

\subsection{Coxeter systems} \label{subsec:Coxeter-systems}
Let $(W,S)$ be a \emph{(finitely generated) Coxeter system}. Recall that this means that $S$ is a finite set, and $W$ is the group with presentation
\[
W = \langle\, S \mid (st)^{m_{st}} = 1 \textrm{ for all } s,t \in S \,\rangle,
\]
where $m_{st} \in \{1,2, \dots, \infty \}$ for all $s,t \in S$, and satisfy $m_{st} = 1$ if and only if $s=t$. (It is understood that the relation $(st)^{m_{st}} = 1$ is omitted when $m_{st}=\infty$.) 
The $|S| \times |S|$ matrix $(m_{st})$ is called the \emph{Coxeter matrix of $(W,S)$}. The \emph{Coxeter diagram} of $(W,S)$ is the diagram consisting of $|S|$ vertices indexed by members of $S$, with two vertices $s$ and $t$ connected by an edge labelled $m_{st}$ if $m_{st} \geq 3$ (although the labels `3' are usually omitted). A group $W$ is called a \emph{Coxeter group} if it has a set of generators $S$ for which $(W,S)$ forms a (finitely generated) Coxeter system. 

A Coxeter system whose unlabelled diagram is a simple path is called a \emph{string} Coxeter system, or said to be \emph{of string type}. In that case, we will always assume that the elements of $S$ are indexed $s_0, \dots, s_{n-1}$ so that the edges of the diagram join $s_{i-1}$ to $s_{i}$ for $1 \leq i \leq n-1$, and the Coxeter matrix is usually abbreviated by its superdiagonal entries $[m_{0,1}, \dots, m_{n-2,n-1}]$. 

Tits \cite{Tits1960} showed how one can associate with every string Coxeter system $(W,S)$ endowed with one of the two possible indexations just described a regular polytope $\cP_W$ whose automorphism group is $W$. This polytope is called the \emph{universal polytope} for $(W,S)$, and is often denoted directly by its Schl{\" a}fli symbol $\{m_{0,1}, \dots, m_{n-2,n-1}\}$, because any other polytope with the same symbol is a quotient of $\cP_W$ (see \cite[Ch.~3D]{McMullenSchulte2002}). 

\begin{remark} \label{rem:regularity-Schlafli-symbol}
When $\cP$ is a universal $n$-polytope, the values of $a_0, \dots, a_k$ from Lemma \ref{lem:higher-regularity} can be deduced from the Schl{\" a}fli symbol $\{p_1, \dots, p_{n-1}\}$ of $\cP$ as follows. 
Let $F$ be a $i$-face of $\cP$, and let $L$ be the link of $F$ in $\cP$. The Schl{\" a}fli symbol of $F$ is then $\{p_1, \dots, p_{i-1}\}$, while that of $L$ is $\{p_{i+2}, \dots, p_{n-1}\}$. 
The integer $k$ defined above coincides with the smallest index $j$ for which $p_j \neq 3$ (with $k = n$ if $p_1 = \dots = p_{n-1} = 3$). 
For $0 \leq i \leq k-1$, the cardinal $a_i$ is the number of vertices in the universal polytope with symbol $\{p_{i+2}, \dots, p_{n-1}\}$, and $a_k = 0$. 
\end{remark}

\subsection{The geometric representation of a Coxeter group} \label{subsec:Coxeter-geometric-representation}
Let $(W,S)$ be a Coxeter system, and let $B$ be the bilinear form on $V = \R^{S}$ given with respect to the canonical basis $\{e_s :  s \in S\}$ by setting
\[
B(e_s,e_t) = -\cos(\pi/ m_{st}) \qquad \hbox{ for all } s,t \in S.
\] 
The \emph{geometric representation of $W$} on $V$ is defined by
\[
s(v) = v - 2{B(v,e_s)}e_s \qquad \hbox{ for all } v \in V, s \in S.
\]
It is a classical theorem of Tits that this representation is faithful. In fact, the dual space $V^*$ has a convex $W$-invariant cone, called the \emph{Tits cone}, which can be used to construct a geometric model for $W$ and a realisation of the associated Wythoffian polytopes, as we will discuss next. 

The image of $W$ under the geometric representation defined above (which we will identify with $W$) preserves the bilinear form $B$, and hence lies in the orthogonal group $\Orth_B$. The \emph{signature of $(W,S)$} is defined to be the signature of $B$; accordingly, we call $(W,S)$ \emph{definite}, \emph{semidefinite} or \emph{indefinite} when $B$ has the corresponding property. It is well known that $W$ is finite if and only if $(W,S)$ is (positive) definite.

\subsection{Wythoffian polytopes}\label{subsec:Wythoffian-polytopes}
Recall that with a Coxeter system $(W,S)$ and a distinguished subset $M$ of $S$ (usually circled on the Coxeter diagram of $(W,S)$, making it an \emph{adorned Coxeter diagram}), \emph{Wythoff's kaleidoscopic construction} associates an abstract polytope $\cP_{W,M}$ in roughly the following way. 
In the Tits cone of $(W,S)$, place a point $x_0$ on the intersection of the walls associated with $S \setminus M$ (the \emph{inactive mirrors} of the kaleidoscope), equidistantly from the walls associated with $M$ (the \emph{active mirrors}). The vertices of $\cP_{W,M}$ are the images of $x_0$ under $W$. The point $x_0$ is connected by an edge to each of its reflections under $S$ (or equivalently, $M$); and the edges of $\cP_{W,M}$ are the images of those edges under $W$. More generally, the images of $x_0$ under any standard parabolic subgroup of $W$ form a standard face of $\cP_{W,M}$, and arbitrary faces are obtained as images of standard faces under $W$. 
Incidence between two standard faces amounts to containment between the smallest standard parabolic subgroups corresponding to these faces (see below), and this incidence relation is propagated to $\cP_{W,M}$ by the action of $W$. 

Any polytope arising from this kaleidoscopic construction is called \emph{Wythoffian}. 
When $(W,S)$ is a string Coxeter system and $M = \{s_0\}$, then one obtains the universal polytope for $(W,S)$. 

The formalism of adorned Coxeter diagrams is very convenient to explore the geometry of a Wythoffian polytope $\cP_{W,M}$. Indeed, the shape of faces and links in $\cP_{W,M}$ can be read directly from the adorned diagram $\Delta$. 
The different faces of $\cP_{W,M}$ are also Wythoffian polytopes, whose adorned diagrams are all obtained through the following recipe. Remove from $\Delta$ any subset $R$ of vertices (not containing $M$), and let $\Delta_R$ be the union of the connected components of the resulting diagram which intersect $M$, with the vertices of $M$ remaining circled. Then $\Delta_R$ is the adorned diagram of (any image of) the standard face corresponding to the standard parabolic subgroup generated by the vertices of $\Delta_R$ (seen as vertices of $\Delta$). In particular, the rank of this parabolic subgroup coincides with the rank of the corresponding face. 
Although it works in greater generality, for our purposes we will restrict the discussion of the link to connected Coxeter diagrams $\Delta$ adorned with only one circle (that is, $(W,S)$ is irreducible and $M$ is a singleton). 
In this setting, the link of any vertex of $\cP_{W,M}$ is again a Wythoffian polytope, whose adorned diagram is obtained by removing $M$ and all edges connecting it from $\Delta$, and circling all vertices of this new diagram $\Delta \setminus M$ that were previously connected to $M$. 

\medskip

Recall that an abstract polytope $\cP$ is recursively called \emph{uniform} if $\Aut \cP$ acts transitively on the vertices of $\cP$, and every facet of $\cP$ is uniform. The discussion above shows why Wythoffian polytopes are {uniform}: their automorphism group acts transitively on vertices by construction, and their faces are all Wythoffian, hence uniform by induction on the rank. However, Wythoffian polytopes are generally not regular. 
Not all uniform polytopes are Wythoffian; a nice example of a non-Wythoffian uniform polytope is the `grand antiprism', discovered by Conway and Guy in 1965 \cite{Conway-Guy-1965}. It is unknown in general which fraction of the uniform polytopes the Wythoffian polytopes account for. 

Wythoff's kaleidoscopic construction was first described in these terms by Coxeter, in a series of papers \cite{Coxeter1934,Coxeter1940,Coxeter1985}. Unfortunately, the authors are not aware of any modern, textbook treatment of this material (except \cite{McMullenSchulte2002} in the special case of string Coxeter systems). 

\medskip

We conclude this subsection with a concrete example. For the remainder of this subsection, let $(W,S)$ be the semidefinite Coxeter system with adorned diagram
\begin{tikzpicture}[scale=0.7, transform shape]
\tikzstyle{every node}=[draw,solid,draw=black,fill=black,shape=circle,minimum size=.05cm,inner sep=.05cm]
\path[solid,draw=black,fill=white,thin] 
(0,0) node (root 1) {} -- node[draw=none,fill=none,shape=circle,above]{4} ++(.5,0) node[double] (root 2) {} --++(.5,0) node (root 3) {} -- node[draw=none,fill=none,shape=circle,above]{4} ++(.5,0) node (root 4) {};
\end{tikzpicture}
, label its vertices $\{s_0, \dots, s_3\}$ from left to right (so that $M = \{s_1\}$), and let $\cP_{W,M}$ be the associated Wythoffian polytope. 
The link of a vertex in $\cP_{W,M}$ has diagram
\begin{tikzpicture}[scale=0.7, transform shape]
\tikzstyle{every node}=[draw,solid,draw=black,fill=black,shape=circle,minimum size=.05cm,inner sep=.05cm]
\path[solid,draw=black,fill=white,thin] 
(0,0) node[double] (root 1) {} (1,0) node[double] (root 3) {} -- node[draw=none,fill=none,shape=circle,above]{4} ++(.5,0) node (root 4) {};
\end{tikzpicture}
, hence is the product of an edge 
(\begin{tikzpicture}[scale=0.7, transform shape]
\tikzstyle{every node}=[draw,solid,draw=black,fill=black,shape=circle,minimum size=.05cm,inner sep=.05cm]
\path[solid,draw=black,fill=white,thin] 
(0,0) node[double] (root 1) {};
\end{tikzpicture}) 
and a square 
(\begin{tikzpicture}[scale=0.7, transform shape]
\tikzstyle{every node}=[draw,solid,draw=black,fill=black,shape=circle,minimum size=.05cm,inner sep=.05cm]
\path[solid,draw=black,fill=white,thin] 
(1,0) node[double] (root 3) {} -- node[draw=none,fill=none,shape=circle,above]{4} ++(.5,0) node (root 4) {};
\end{tikzpicture}); 
in other words, it is a square prism. The different proper faces of $\cP_{W,M}$ can be listed following the recipe above, by removing in turn $s_0$, $s_3$, $s_2$, $\{s_0, s_3\}$, and $\{s_0, s_2\}$ from the diagram. The resulting possibilities are respectively an octahedron
(\begin{tikzpicture}[scale=0.7, transform shape]
\tikzstyle{every node}=[draw,solid,draw=black,fill=black,shape=circle,minimum size=.05cm,inner sep=.05cm]
\path[solid,draw=black,fill=white,thin] 
(0.5,0) node[double] (root 2) {} --++(.5,0) node (root 3) {} -- node[draw=none,fill=none,shape=circle,above]{4} ++(.5,0) node (root 4) {};
\end{tikzpicture}), 
a cuboctahedron
(\begin{tikzpicture}[scale=0.7, transform shape]
\tikzstyle{every node}=[draw,solid,draw=black,fill=black,shape=circle,minimum size=.05cm,inner sep=.05cm]
\path[solid,draw=black,fill=white,thin] 
(0,0) node (root 1) {} -- node[draw=none,fill=none,shape=circle,above]{4} ++(.5,0) node[double] (root 2) {} --++(.5,0) node (root 3) {};
\end{tikzpicture}), 
a square
(\begin{tikzpicture}[scale=0.7, transform shape]
\tikzstyle{every node}=[draw,solid,draw=black,fill=black,shape=circle,minimum size=.05cm,inner sep=.05cm]
\path[solid,draw=black,fill=white,thin] 
(0,0) node (root 1) {} -- node[draw=none,fill=none,shape=circle,above]{4} ++(.5,0) node[double] (root 2) {};
\end{tikzpicture}), 
a triangle
(\begin{tikzpicture}[scale=0.7, transform shape]
\tikzstyle{every node}=[draw,solid,draw=black,fill=black,shape=circle,minimum size=.05cm,inner sep=.05cm]
\path[solid,draw=black,fill=white,thin] 
(0,0) node[double] (root 2) {} --++(.5,0) node (root 3) {};
\end{tikzpicture}), 
and with no surprise, an edge
(\begin{tikzpicture}[scale=0.7, transform shape]
\tikzstyle{every node}=[draw,solid,draw=black,fill=black,shape=circle,minimum size=.05cm,inner sep=.05cm]
\path[solid,draw=black,fill=white,thin] 
(0,0) node[double] (root 1) {};
\end{tikzpicture}). 
With this information, it is not unreasonable to guess that the polytope $\cP_{W,M}$ is indeed the rectified cubic euclidean honeycomb. 

\section{From the Cayley graph of $(W,S)$ to an associated Wythoffian polytope} \label{sec:comparing-graphs}

In this section, we compare the Cayley graph and any Wythoffian polytope $\cP_{W,M}$ associated with a Coxeter system $(W,S)$, with the aim of constructing highly regular finite quotients of $\cP_{W,M}$ when $\cP_{W,M}$ has finite vertex links, which can be made arbitrarily large if $W$ is infinite.

\subsection{Comparing graphs} \label{subsec:comparing-graphs}
Let $(W,S)$ be a Coxeter system and $M$ a subset of $S$. 
The appropriate space in which to study $\Cay(W,S)$ and $\cP_{W,M}$ simultaneously is the Coxeter complex $\cC$ of $(W,S)$.
Recall that $\cC$ is a chamber complex on which $W$ acts simply-transitively chamber-wise (see \cite[Ch.~V]{Bourbaki2007}). Hence $\Cay(W,S)$ can (and will) be identified with the set of chambers of $\cC$, with two distinct chambers being adjacent in $\Cay(W,S)$ if they share a wall. 
The chamber complex $\cC$ can be realised geometrically as the complex determined by the walls of $(W,S)$ in the Tits cone, and we will identify both $\cC$ and $\cP_{W,M}$ with their geometric realisations inside the Tits cone.

The sets of walls and vertices of $\cC$ can be partitioned into types: a wall is of type $t$ if its reflection is conjugate to the element $t \in S$, while a vertex $v$ is of type $t$ if it lies on no wall of type $t$. A chamber in $\cC$ is delimited by one wall of each type, and contains one vertex of each type. The set of chambers containing a given vertex $v$ of type $t$ is in a bijective correspondence with $W_{t} = \langle S \setminus \{t\} \rangle$. 
More generally, the set of chambers of $\cC$ containing a given vertex $v$ of $\cP_{W,M}$ is in bijective correspondence with the stabilizer $W_v$ of $v$ in $W$. Note that $W_v$ is the conjugate of the standard parabolic subgroup $W_M = \langle S \setminus M \rangle$ by any element of $W$ which brings the standard vertex $x_0$ of $\cP_{W,M}$ to $v$. 

When $M=\{s\}$ consists of a single element (in particular, when $\cP_{W,M}$ is the universal polytope of a string Coxeter system), the vertices of $\cP_{W,M}$ are identified with the vertices in $\cC$ of type $s$ by construction. 
If $|M| \geq 2$, the vertices of $\cP_{W,M}$ are not vertices of $\cC$, because they do not lie on any hyperplane whose type belongs to $M$. 
Regardless of the size of $M$, two distinct vertices $v$ and $v'$ of $\cP_{W,M}$ are connected by an edge if and only if there are two neighbouring chambers of $\cC$ containing $v$ and $v'$ respectively. 

\begin{lemma} \label{lem:polytope-quasi-isometric}
Let $(W,S)$ be Coxeter system and $M$ a subset of $S$. The $1$-skeleton $X$ of the associated Wythoffian polytope $\cP_{W,M}$ and the Cayley graph $\Cay(W,S)$ are quasi-isometric if and only if $\cP_{W,M}$ has finite vertex links. In this case, the natural $W$-equivariant surjection $f: \Cay(W,S) \to X$ that sends a chamber to the unique vertex of $\cP_{W,M}$ it contains is a nonexpansive quasi-isometry.
\end{lemma}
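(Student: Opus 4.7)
The plan is to construct the map $f$ explicitly and analyse it through the standard parabolic subgroup $W_M := \langle S \setminus M \rangle$, which is the stabiliser of the base vertex $x_0$ of $\cP_{W,M}$ (as $x_0$ lies on exactly the walls of types in $S \setminus M$). Since $W$ acts simply transitively on chambers of the Coxeter complex, each chamber $w \in W$ contains the unique vertex $w \cdot x_0$ of the orbit $W\cdot x_0$, so I set $f(w) := w \cdot x_0$. This is $W$-equivariant and surjective by construction, with fibre over $v = w\cdot x_0$ equal to the left coset $wW_M$. Since the link of $x_0$ in $\cP_{W,M}$ is itself a Wythoffian polytope whose Coxeter group is $W_M$, finiteness of all vertex links of $\cP_{W,M}$ is equivalent to $|W_M| < \infty$.

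Nonexpansiveness is immediate: for adjacent chambers $w, ws \in W$ with $s \in S$, either $s \in S \setminus M$ and $s$ fixes $x_0$ (so $f(w) = f(ws)$), or $s \in M$ and the pair $(wx_0, wsx_0)$ is by construction an edge of $\cP_{W,M}$. Either way $d_X(f(w), f(ws)) \leq 1$, and the triangle inequality extends this to $d_X(f(w_1), f(w_2)) \leq d_{\Cay(W,S)}(w_1,w_2)$ in general.

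For the forward implication of the equivalence, assume $W_M$ is finite and set $D := \diam W_M$. The standard property $\ell_S|_{W_M} = \ell_{S \setminus M}$ for standard parabolic subgroups ensures that every coset $wW_M$ has diameter $D$ inside $\Cay(W,S)$. To bound $d_{\Cay(W,S)}$ in terms of $d_X$, I would lift edge-paths from $X$: any edge $\{v,v'\}$ of $X$ arises from some adjacent chambers, so starting from a chosen lift $w \in f^{-1}(v)$, one can travel at cost at most $D$ inside the coset $wW_M$ to reach one end of such an adjacent pair and then step one edge to produce a lift in $f^{-1}(v')$. Iterating along a geodesic of length $n$ in $X$ from $f(w_1)$ to $f(w_2)$ and applying one final in-fibre correction gives $d_{\Cay(W,S)}(w_1, w_2) \leq (D+1)\,d_X(f(w_1), f(w_2)) + D$, which combined with nonexpansiveness shows that $f$ is a $(D+1, D)$-quasi-isometry.

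The main obstacle is the converse direction: when $W_M$ is infinite, one must rule out \emph{every} quasi-isometry $X \to \Cay(W,S)$, not merely the failure of $f$ itself. My plan is to exploit the double growth inequality $\beta_X(r) \leq \beta_W(r) \leq \beta_X(r)\cdot \gamma_{W_M}(2r)$, valid for the bounded-valence graphs $X$ and $\Cay(W,S)$ (the right inequality follows from the isometric embedding $\Cay(W_M, S \setminus M) \hookrightarrow \Cay(W,S)$, which shows that each fibre of $f$ meets $B^W_r(e)$ in at most $\gamma_{W_M}(2r)$ elements). Combining this with the quasi-isometric invariance of growth type for bounded-valence graphs should force $\gamma_{W_M}$ to be bounded and hence $W_M$ to be finite; extracting a clean contradiction in the exponential-growth regime, where multiplicative polynomial factors can be absorbed, is the delicate step where I expect to spend the most effort.
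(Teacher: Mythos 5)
Your construction of $f$, the nonexpansiveness argument, and the forward implication coincide with the paper's proof: the paper likewise identifies $f^{-1}(v)$ with the set of chambers containing $v$, takes $D$ to be its diameter (the length of the longest word of $W_v \cong \langle S\setminus M\rangle$), and lifts a geodesic of $X$ edge by edge, paying at most $D$ inside each fibre, to obtain $d_{\Cay(W,S)}(w,w') \le (D+1)\,d_X(f(w),f(w')) + D$. That half of your argument is correct and is essentially the paper's.

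The genuine gap is in the converse, exactly where you anticipated difficulty, and the problem is not merely one of effort: the strategy cannot be completed. First, your double growth inequality is asserted ``for the bounded-valence graphs $X$ and $\Cay(W,S)$,'' but in the converse you are assuming the vertex links are infinite, and then $X$ is precisely \emph{not} of bounded valence --- the neighbours of $x_0$ form the $W_M$-orbit of $\{sx_0 : s \in M\}$, which is infinite, so $\beta_X(1) = \infty$ and the quantities you propose to compare are not finite (your left-hand inequality $\beta_X(r) \le \beta_W(r)$ also silently requires the lifting estimate of the forward direction, which needs $D < \infty$). Second, even if the growth functions were all finite, the inequalities point the wrong way: from $\beta_W(r) \le \beta_X(r)\,\gamma_{W_M}(2r)$ together with the QI-equivalence $\beta_X \simeq \beta_W$ you only get $\beta_W(r) \le C\,\beta_W(Ar+B)\,\gamma_{W_M}(2r)$, which is vacuous for any $\gamma_{W_M} \ge 1$; and in the exponential-growth regime (the typical case here, since $(W,S)$ is meant to be indefinite) growth type simply cannot detect the multiplicative factor $\gamma_{W_M}$, so no contradiction with $W_M$ infinite can be extracted from growth alone.

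The paper's converse is instead a one-line local-finiteness observation, not a growth argument: if the vertex links are infinite, then every vertex of $X$ has infinitely many neighbours, so $X$ contains an infinite set of diameter at most $2$, whereas $\Cay(W,S)$ is locally finite ($S$ being finite, all balls of finite radius are finite). It is this failure of $X$ to be locally finite --- the very feature that invalidates your growth comparison --- that the paper uses directly as the obstruction to a quasi-isometry. So the missing idea is to replace the (too coarse) invariant ``growth type'' by the (much cruder but here decisive) invariant of local finiteness.
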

\begin{proof}
Let $d_S$ (resp.~$d_X$) denote the geodesic distance in the graph $\Cay(W,S)$ (resp.~$X$). 
The map $f$ is clearly $W$-equivariant and surjective. The preimage under $f$ of a vertex $v$ of $\cP_{W,M}$ is the set of chambers in $\cC$ containing $v$, which forms a convex chamber subcomplex of $\cC$ (the \emph{link of $v$ in $\cC$}). 

Suppose that $\cP_{W,M}$ has finite vertex links and pick a vertex $v \in \cP_{W,M}$. 
If $C$ is chamber of $\cC$ containing $v$, then $v$ is connected by an edge of $\cP_{W,M}$ to its reflection $v'$ through any wall of $C$ not containing $v$. 
In consequence, the set $f^{-1}(v)$ of chambers containing $v$ must be a finite convex chamber subcomplex of $\cC$. 
Let $D$ denote its diameter (measured with $d_S$); then $D$ coincides with the length of the longest word in the finite Coxeter group $W_v \cong \langle S \setminus M \rangle$ (with respect to $S \setminus M$). 
If $\gamma$ is a geodesic in $\Cay(W,S)$, then $f(\gamma)$ is a walk in $X$ (possibly with repetitions). Hence for any $w, w' \in W$, we know that
\[
d_X(f(w),f(w')) \leq d_S(w,w').
\]
On the other hand, let $(v_0, \dots, v_n)$ be a geodesic in $X$ connecting $v_0 = f(w)$ to $v_n = f(w')$. Let $C_0$ and $C'_{n}$ be the chambers of $\cC$ corresponding to $w$ and $w'$ respectively, and let $C'_i$ and $C_{i+1}$ denote the adjacent chambers of $\cC$ that contain $v_i$ and $v_{i+1}$ respectively. 
Then for each $0 \leq i \leq n$, there is a path $\gamma_i$ of length at most $D$ connecting $C_i$ to $C'_i$ in the link (in $\cC$) of $v_i$. Concatenating the paths $\gamma_0, \dots, \gamma_n$, we see that 
\[
d_S(w,w') \leq (D+1) \cdot d_X(f(w),f(w')) + D,
\]
which proves the first implication. 

For the converse, it suffices to note that if $\cP_{W,M}$ has infinite vertex links, then the neighbourhood of a vertex in $X$ is infinite, while on the other hand, balls of finite radius in $\Cay(W,S)$ are finite.
\end{proof}

\begin{remark} \label{rem:complex-quasi-isometric}
Using similar arguments, one can show that the $1$-skeleton of the Coxeter complex $\cC$ of a (finitely generated) Coxeter system $(W,S)$ is quasi-isometric to $\Cay(W,S)$ if and only if $(W,S)$ is \emph{barely infinite}, that is, every proper parabolic subgroup of $W$ is finite. Unfortunately, the $1$-skeleton of a Coxeter complex is seldom a regular graph. 
\end{remark}

\subsection{Comparing quotients} \label{subsec:comparing-quotients}
For the remainder of this section, $(W,S)$ will be a Coxeter system and $M$ a subset of $S$ such that the associated Wythoffian polytope $\cP_{W,M}$ has finite vertex links. As before, $X$ denotes the $1$-skeleton of $\cP_{W,M}$. Let $N$ be a normal subgroup of $W$ and $\pi_N$ denote the quotient map $W \to W/N$. 

Note that the quotient graph $\Cay(W,S) / N$ is naturally isomorphic to the Cayley graph $\Cay(\pi_N(W), \pi_N(S))$. Hence the quasi-isometry $f$ arising from Lemma \ref{lem:polytope-quasi-isometric} induces a mapping $f_{N}$ defined by the following commuting diagram of $W$-equivariant surjections. 
\[
\begin{tikzcd}
{\Cay(W,S)} \arrow[r, "f"] \arrow[d, "\pi_N"'] & X \arrow[d]    \\
{\Cay(\pi_N(W), \pi_N(S))} \arrow[r, "f_N"']   & X / N
\end{tikzcd}
\]
Since geodesics in $\Cay(\pi_N(W), \pi_N(S))$ and $X / N$ can be lifted to geodesics in $\Cay(W,S)$ and $X$ respectively, the proof of Lemma \ref{lem:polytope-quasi-isometric} shows that $f_N$ is a quasi-isometry with the same quasi-isometry constants as $f$ (and in particular, these do not depend on $N$).

\subsection{Comparing regularity} \label{subsec:comparing-regularity}
In order to ensure that the graph $X / N$ retains the regularity of $X$, it suffices that the quotient map $X \to X / N$ is injective on the neighbourhood of any vertex of $X$ and creates no new triangles. 
(Note that the graph $X / N$ is always regular: $W / N$ acts transitively on $X / N$ because $W$ does so on the vertices of $\cP_{W,M}$. In fact, $X / N$ is even arc-transitive when $|M|=1$, since $W$ acts transitively on pairs of adjacent vertices of $\cP_{W,M}$ in this case. 
What is at stake here is the higher regularity, stemming for example from Lemma \ref{lem:higher-regularity}.)
In turn, because $N$ acts on $X$ by graph automorphisms, this can be achieved by requiring that the action of $N$ on $X$ has minimal displacement at least $4$. 
In view of Lemma \ref{lem:polytope-quasi-isometric}, this would follow if the action of $N$ on $\Cay(W,S)$ had minimal displacement at least $4(D+1)+D$, or in other words, if every nontrivial element in $N$ had length at least $5D+4$. 
The elements in $W$ whose lengths are less than $5D+4$ form a finite set $T$.

Because $W$ is a finitely generated linear group, it is residually finite by a classical theorem of Malcev \cite{Malcev1940}. Accordingly, let $\{N_m\}_{m \in I}$ be a collection of finite-index normal subgroups of $W$ which is closed under intersection and satisfies $\bigcap_{m \in I} N_m = \{1\}$. 
Let $I' = \{m \in I \mid T \cap N_m = \{1\}\}$, so that $\{N_m\}_{m \in I'}$ is again closed under intersection and satisfies $\bigcap_{m \in I'} N_m = \{1\}$. By the previous paragraph, for $m \in I'$ the graph $X / N_m$ has the same regularity as $X$. 
Note that if $W$ is infinite then the indices of the subgroups $N_m$ are necessarily unbounded, because finitely generated groups only have finitely many subgroups of a given finite index. 
\medskip

In summary, we have so far proved the following. 

\begin{proposition}
Let $(W,S)$ be a non-definite Coxeter system and $M$ a subset of $S$, such that the Wythoffian polytope $\cP_{W,M}$ has finite vertex links. Let $X$ be the $1$-skeleton of $\cP_{W,M}$ and let $(a_0, \dots, a_n)$ be the regularity of $X$ (in the sense of \S \ref{subsec:higher-regularity}). 
Given any collection $\{N_m\}_{m \in I}$ of finite-index normal subgroups of $W$ closed under intersection and satisfying $\bigcap_{m \in I} N_m = \{1\}$, there exists $I' \subset I$ with the same properties, such that for $m \in I'$ the quotient graphs $X / N_m$ are $(a_0, \dots, a_n)$-regular and have unbounded sizes. 
\end{proposition}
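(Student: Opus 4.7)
The plan is to verify that the set $I' = \{m \in I \mid T \cap N_m = \{1\}\}$ introduced in the paragraph preceding the statement has the three required properties, where $T$ denotes the (finite) set of elements of $W$ of $S$-length at most $5D+3$, with $D$ the diameter of a vertex link. The technical ingredients have essentially all been assembled above, so what remains is bookkeeping.

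First, closure of $I'$ under intersection is immediate from the definition. For triviality of $\bigcap_{m \in I'} N_m$, I would use that $T$ is finite and $\bigcap_{m \in I} N_m = \{1\}$: closure of $I$ under intersection yields some $m_0 \in I$ with $N_{m_0} \cap T = \{1\}$, so that $m_0 \in I'$. Then for any $m \in I$, the intersection $m \cap m_0$ lies in $I$ and its corresponding normal subgroup sits inside $N_{m_0}$, hence belongs to $I'$. The subfamily $\{m \cap m_0 : m \in I\} \subseteq I'$ already has trivial intersection $N_{m_0} \cap \bigcap_{m \in I} N_m = \{1\}$.

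The regularity claim for $X/N_m$ with $m \in I'$ follows from the quasi-isometry comparison of Lemma \ref{lem:polytope-quasi-isometric}: the length bound $|n|_S \geq 5D+4$ for every nontrivial $n \in N_m$ forces $N_m$ to act on $X$ with minimal displacement at least $4$, which is enough to ensure that the quotient map is an isomorphism on every closed neighbourhood of a vertex. A quick induction on $n$ transports $(a_0, \dots, a_n)$-regularity from $X$ down to $X/N_m$: the sphere $\Sph_{X/N_m}(\bar v)$ around the image of $v$ is isomorphic to $\Sph_X(v)$ and hence inherits its $(a_1, \dots, a_n)$-regularity.

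Finally, unboundedness of the sizes $|X/N_m|$ for $m \in I'$ reduces to the unboundedness of the indices $[W:N_m]$, since vertex stabilizers in $W$ are finite of uniformly bounded order $|W_M|$, whence $|X/N_m| \geq [W:N_m]/|W_M|$. For the latter, I would invoke that $W$ is infinite (as $(W,S)$ is non-definite) together with the standard fact that a finitely generated group has only finitely many subgroups of each bounded index: if the $[W:N_m]$ were bounded, the family $\{N_m\}_{m \in I'}$ would be finite, and its trivial total intersection would force $W$ itself to be finite. The main (and rather mild) obstacle throughout the argument is tracking how minimal displacement on $\Cay(W,S)$ passes through the quasi-isometry constants of Lemma \ref{lem:polytope-quasi-isometric} to yield minimal displacement on $X$; once the constant $5D+4$ is pinned down correctly, everything else is routine.
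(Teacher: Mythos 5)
Your proposal is correct and follows essentially the same route as the paper: the same finite set $T$ of elements of length less than $5D+4$, the same subfamily $I'$, the displacement-$4$ criterion transported through Lemma \ref{lem:polytope-quasi-isometric}, and the finitely-many-subgroups-of-bounded-index argument for unboundedness. You in fact supply a few details the paper leaves implicit (the trivial intersection over $I'$ via a single auxiliary index $m_0$, and the lower bound $|X/N_m| \geq [W:N_m]/|W_M|$), but the argument is the same.
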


\subsection{Comparing expansion}\label{subsec:comp-exp}
It remains to determine when the collection $\{X / N_m\}_{m \in I}$ forms a family of expanders. Let $\pi_m$ and $f_m$ denote the maps constructed in \S\ref{subsec:comparing-quotients} for the subgroup $N = N_m$. 
The following well-known proposition, applied to $f_m$, indicates that to this end it is equivalent to investigate when the Cayley graphs $\Cay(\pi_m(W), \pi_m(S))$ form a family of expanders. 
This will be the subject of \S\ref{sec:density-superapproximation}. 

\medskip

Recall that a map $f$ from a metric space $(X,d_X)$ to a metric space $(Y,d_Y)$ is called a {\em quasi-isometry} if there exist constants $A\geq 1,B\geq 0,C\geq 0$ such that the following two conditions hold:
\begin{enumerate}[label=(\roman*)]
    \item $\forall \: x,x' \in X: A^{-1} d_X(x,x') - A^{-1}B \leq d_Y(f(x),f(x') \leq Ad_X(x,x')+B$,
    \item $\forall \: y \in Y: \exists x \in X: d_Y(y,f(x))\leq C$.
\end{enumerate}
Note that one can always weaken the conditions to $A=B=C=:D$; call this a {\em $D$-quasi-isometry}. 
When there is a quasi-isometry $f: X \to Y$, we call $(X,d_X)$ and $(Y,d_Y)$ \emph{quasi-isometric}. It is an easy exercise to show that when $f: X \to Y$ is a quasi-isometry, there exists a \emph{quasi-inverse} to $f$, that is, a quasi-isometry $g: Y \to X$ with constants depending only on those of $f$, and for which $f \circ g$ and $g \circ f$ have displacement bounded by the quasi-isometry constants of $f$. In consequence, quasi-isometry defines an equivalence relation between metric spaces. 

\begin{proposition} \label{lem:comparing-expansion}
Let $D \geq 1$ and let $f: Y \to Z$ be a $D$-quasi-isometry between two finite connected graphs $Y$ and $Z$. Then there exist constants $c$, $c' > 0$ depending only on the quasi-isometry constants of $f$ (or equivalently, on $D$) and on the maximum degrees of $Y$ and $Z$, such that if $h(Y) \geq \epsilon$, then $h(Z) \geq \min(c \epsilon, c')$. 
\end{proposition}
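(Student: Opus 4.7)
The strategy is the standard quasi-isometry invariance argument for Cheeger constants of bounded-degree graphs: pull a candidate subset $A \subset Z$ back to a set of comparable size in $Y$, apply $h(Y)\geq \epsilon$ there, and push the resulting edge-boundary forward through $f$, keeping the multiplicity under control. The second constant $c'$ will absorb the regime of very small $A$, where this reduction is not tight and has to be replaced by an argument from the connectedness of $Z$.

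\textbf{Setup.} First I would fix a quasi-inverse $g:Z \to Y$ of $f$ with quasi-isometry constants depending only on $D$. Three elementary consequences of bounded degree combined with the quasi-isometry inequalities will be crucial: (i) the fibres $f^{-1}(z)$ and $g^{-1}(y)$ are all of cardinality at most some $M=M(D,\Delta_Y,\Delta_Z)$, since any two points with the same image are at uniformly bounded distance and thus lie in a finite ball; (ii) consequently $|Y|$ and $|Z|$ differ by at most a multiplicative factor $L=L(D,\Delta_Y,\Delta_Z)$; (iii) the image of any edge under $f$ or $g$ is a pair of points at distance at most $2D$.

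\textbf{Main argument.} Fix $A \subset Z$ with $|A|\leq|Z\setminus A|$. If $|A|\leq M_0$ for a constant $M_0$ depending only on $D$ and the degrees, then connectedness of $Z$ forces $|\partial_Z A| \geq 1$, whence the Cheeger ratio is at least $1/M_0=:c'$ and we are done. Otherwise set $B:=g(A)\subset Y$. From (i) we have $|A|/M\leq |B|\leq |A|$, and the analogous estimate applied to $Z\setminus A$ together with (ii) forces $|Y\setminus B|\geq |A|/(ML)$, provided $M_0$ was chosen large enough. Hence the hypothesis $h(Y)\geq \epsilon$ applies to whichever of $B$ or $Y\setminus B$ is smaller, yielding $|\partial_Y B|\geq c_1 \epsilon |A|$ for some $c_1=c_1(D,\Delta_Y,\Delta_Z)$.

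\textbf{Transfer and conclusion.} Each edge $(y,y')\in\partial_Y B$ gives, via (iii) together with $f\circ g \approx \mathrm{id}_Z$, a pair $f(y), f(y')\in Z$ at distance $\leq 2D$ with $f(y)$ within bounded distance of $A$ and $f(y')$ within bounded distance of $Z\setminus A$; a shortest $Z$-path between them must therefore cross some edge $e\in\partial_Z A$, and I assign such an $e$ to $(y,y')$. The fibre over any $e=(z,z')\in\partial_Z A$ consists only of pairs $(y,y')$ with $f(y)$ lying in a ball of bounded radius around $z$, hence has cardinality at most a constant $K=K(D,\Delta_Y,\Delta_Z)$ by (i) and bounded degree. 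Dividing gives $|\partial_Z A|\geq c_1\epsilon|A|/K$, and setting $c:=c_1/K$ delivers $h(Z)\geq\min(c\epsilon,c')$. The main bookkeeping obstacle is in the main argument: guaranteeing that both $B$ and $Y\setminus B$ are simultaneously non-trivially large, which forces one to invoke the comparability $|Y|\asymp|Z|$ from (ii) symmetrically in $A$ and $Z\setminus A$, especially in the regime where $A$ is a positive fraction of $Z$ so that $B$ risks exhausting $Y$.
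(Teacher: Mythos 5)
Your overall architecture (pull $A$ back to $Y$, apply $h(Y)\geq\epsilon$ there, push the boundary forward with bounded multiplicity) is the right one, and your transfer step at the end is sound. But the step ``$|Y\setminus B|\geq |A|/(ML)$, provided $M_0$ was chosen large enough'' is false, and enlarging $M_0$ cannot repair it, because the failure occurs when $|A|$ is a positive fraction of $|Z|$, not when $|A|$ is small. Concretely: let $Y$ be a path on $n$ vertices, $Z$ a path on $2n$ vertices, $f(y_i)=z_{2i}$, $g(z_j)=y_{\lceil j/2\rceil}$ (a quasi-inverse with constants independent of $n$), and let $A$ be the set of odd-indexed vertices of $Z$. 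Then $|A|=|Z|/2$ but $B=g(A)=Y$, so $Y\setminus B=\emptyset$, $\partial_Y B=\emptyset$, and your main argument produces nothing. Invoking $|Y|\asymp|Z|$ ``symmetrically'' does not help: the comparability constant $L$ can exceed $2$, so the best you can say is $|Y\setminus B|\geq |Z|/L-|A|$, which can be negative. The same degeneracy appears if you pull back along $f$ instead of pushing forward along $g$ (then $f(Y)$ may essentially miss $A$, or miss $Z\setminus A$).

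The missing ingredient is a second \emph{non-trivial} case, and this is exactly what the constant $c'$ in the statement is for; your $c'=1/M_0$ only covers the regime $|A|\leq M_0$, which is not where the difficulty lies. When the reduction to $Y$ degenerates --- say $g(A)$ exhausts most of $Y$, which forces most of $Z\setminus A$ (a set of size at least $|A|$) to lie within some bounded distance $R$ of $A$ --- one must bound $|\partial_Z A|$ directly: every such vertex lies within distance $R$ of an endpoint of an edge of $\partial_Z A$, so a ball-counting argument gives $|\partial_Z A|\geq |A|/C$ for a constant $C$ depending only on $R$ and the maximum degree of $Z$, i.e.\ a Cheeger ratio bounded below by a constant independent of $\epsilon$. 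This is precisely how the paper proceeds: it works with $f$ directly, fixes $V\subset Z$ with $|V|\leq\frac12|Z|$, and splits into three cases according to the sizes of $f(Y)\setminus V$ and $f(Y)\cap V$; the first two (degenerate) cases yield the constant bound that becomes $c'$, and only the third uses $h(Y)\geq\epsilon$ in the way your main argument does.
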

\begin{proof}
Let $a_Y$ and $a_Z$ be the maximum degrees of $Y$ and $Z$. Let $c_1,c_3 \geq 1$ and $c_2,c_4 \geq 0$ be such that
\[
c_1^{-1} d_Y(y,y') - c_1^{-1} c_2 \leq d_Z(f(y),f(y')) \leq c_3 d_Y(y,y') + c_4 \quad \textrm{for all $y,y' \in Y$},
\]
and let $c_5$ be such that every vertex of $Z$ lies at distance at most $c_5$ from $f(Y)$. 
Note that there exists $c_6$ (depending only on $c_2$ and $a_Y$) such that $|f^{-1}(z)| \leq c_6$ for any $z \in Z$. 

Set $r = (4 a_Z^{c_5})^{-1}$ (noting that $r\leq \frac{1}{4})$. Pick $V \subset Z$ such that $0 < |V| \leq \frac{1}{2}|Z|$. We distinguish three cases.
\smallskip

First, suppose $|f(Y) \setminus V| \leq r |f(Y)|$. Then no more than $a_Z^{c_5} |f(Y) \setminus V| \leq r a_Z^{c_5} |Z| = \frac{1}{4} |Z|$ vertices lie at distance at most $c_5$ from $f(Y) \setminus V$. 
The remaining $\frac{3}{4}|Z|$ or more vertices must then lie at distance at most $c_5$ from $f(Y) \cap V$, with at least $\frac{1}{4}|Z|$ of them lying outside $V$. 
At the same time, the paths of length $c_5$ leaving $V$ via a given edge of $\partial V$ cannot reach more than $a_Z^{c_5-1}$ vertices altogether. 
As a consequence, at least $(4a_Z^{c_5-1})^{-1} |Z|$ edges must be leaving $V$, which shows that $|\partial V| / |V| \geq (2a_Z^{c_5-1})^{-1}$.
\smallskip

Second, suppose $|f(Y) \cap V| \leq 2r|V|$. Then no more than $a_Z^{c_5} |f(Y) \cap V| \leq 2r a_Z^{c_5}|V| = \frac{1}{2}|V|$ vertices lie at distance at most $c_5$ from $f(Y) \cap V$, so the remaining $\frac{1}{2}|V|$ or more vertices of $V$ must lie at distance at most $c_5$ from $f(Y) \setminus V$. As above, this implies that at least $(2a_Z^{c_5-1})^{-1}|V|$ edges are leaving $V$, which again shows that $|\partial V| / |V| \geq (2a_Z^{c_5-1})^{-1}$.
\smallskip

Third and foremost, suppose $2r|V| \leq |f(Y) \cap V| \leq (1-r)|f(Y)|$. Then
\[
|f^{-1}(V)| \geq 2r|V|
\quad \textrm{and} \quad
|f^{-1}(Z \setminus V)| \geq |f(Y) \setminus V| \geq r|f(Y)| \geq \frac{2r^2}{1-r}|V|.
\]
As $h(Y) \geq \epsilon$, this implies that
\[
|\partial f^{-1}(V)| \geq \epsilon \min(|f^{-1}(V)|, |f^{-1}(Z \setminus V)|) \geq  c^{-1}_7 \epsilon |V|,
\]
where $c_7 = \frac{1-r}{2r^2}$. Hence there are at least $a_Y^{-1} c_7^{-1} \epsilon |V|$ vertices of $Y$ connected to but not lying in $f^{-1}(V)$. 
Their images under $f$ form a set of at least $(a_Y c_6 c_7)^{-1} \epsilon |V|$ vertices of $Z$, lying at distance at most $c_3 + c_4$ from $V$ outside of $V$. As a consequence, at least $(a_Y a_Z^{c_3+c_4-1} c_6 c_7)^{-1} \epsilon |V|$ edges are leaving $V$. 
\end{proof}

\begin{remark}
If $f$ happens to be surjective (as is the case in our setting), the proof of Proposition \ref{lem:comparing-expansion} simplifies considerably. The first two cases are irrelevant, and in the third, one easily obtains $h(Z) \geq (a_Y a_Z^{c_3+c_4-1} c_6)^{-1} \epsilon$ by using the fact that $|f^{-1}(Z \setminus V)| \geq |V|$.
\end{remark}

With the existence of quasi-inverses in mind, the following is an immediate corollary to Proposition \ref{lem:comparing-expansion}. 

\begin{corollary} \label{cor:comparing-expansion}
Let $\{Y_m\}_{m \in J}$ and $\{Z_m\}_{m \in J}$ be two families of graphs of bounded maximum degree, indexed by a set $J$. Suppose that there is a $D$-quasi-isometry $f_m: Y_m \to Z_m$ for every $m \in J$. Then $\{Y_m\}_{m \in J}$ is a family of expanders if and only if $\{Z_m\}_{m \in J}$ is. 
\end{corollary}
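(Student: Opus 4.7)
The statement is set up precisely so that it is an immediate double application of Proposition \ref{lem:comparing-expansion}, so the proof is essentially a bookkeeping exercise in uniformity of constants. I would not introduce any new machinery; the only thing that needs care is to verify that each invocation of Proposition \ref{lem:comparing-expansion} produces constants that are independent of the index $m \in J$.

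For the forward implication, suppose $\{Y_m\}_{m \in J}$ is a family of expanders, with common lower bound $\epsilon > 0$ on the Cheeger constants $h(Y_m)$. Since the $Y_m$ are connected (as $h(Y_m) > 0$) and each $f_m$ is a quasi-isometry, each $Z_m$ is connected as well. Let $a$ be a uniform bound on the maximum degrees of both families (finite by assumption). Applying Proposition \ref{lem:comparing-expansion} to each $f_m \colon Y_m \to Z_m$ yields constants $c, c' > 0$ that depend only on $D$ and $a$, and in particular are independent of $m$, such that
\[
h(Z_m) \,\geq\, \min(c\,\epsilon,\, c') \qquad \text{for every } m \in J.
\]
Combined with the uniform degree bound on the $Z_m$, this shows that $\{Z_m\}_{m \in J}$ is a family of expanders.

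For the converse, I would invoke the existence of quasi-inverses, which is mentioned immediately before Proposition \ref{lem:comparing-expansion}: each $D$-quasi-isometry $f_m$ admits a quasi-inverse $g_m \colon Z_m \to Y_m$ whose quasi-isometry constants depend only on those of $f_m$, hence only on $D$. In particular there exists $D'$, depending only on $D$, such that each $g_m$ is a $D'$-quasi-isometry. Applying the forward direction to the family $\{g_m\}_{m \in J}$ (with the roles of $Y_m$ and $Z_m$ swapped) yields the other implication.

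There is no real obstacle here; the single point meriting attention is to check that the constants $c, c'$ produced by Proposition \ref{lem:comparing-expansion} really depend only on $D$ and the degree bound $a$, and not on any individual pair of graphs, so that the Cheeger lower bound obtained at the end is uniform over $m \in J$. This is exactly what the statement of Proposition \ref{lem:comparing-expansion} provides.
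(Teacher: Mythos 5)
Your proof is correct and is exactly the argument the paper intends: the paper states this corollary as an immediate consequence of Proposition \ref{lem:comparing-expansion} ``with the existence of quasi-inverses in mind,'' which is precisely your two-step application (forward direction directly, converse via a quasi-inverse with constants depending only on $D$). The one point you rightly flag --- uniformity of $c, c'$ in $m$ --- is indeed supplied by the proposition's statement, so nothing is missing.
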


\section{Superapproximation for indefinite Coxeter groups} \label{sec:density-superapproximation}

Since $S$ is assumed to be finite, $W$ is a discrete subgroup of $\Orth_B(\R)$ (see \S\ref{subsec:Coxeter-geometric-representation}). This implies that if $(W,S)$ is semidefinite (resp.~definite), then $W$ is virtually abelian (resp.~finite) \cite[Ch.~V.4]{Bourbaki2007}. 
As virtually abelian groups are amenable, there is no chance to witness superapproximation or expansion phenomena in $(W,S)$ if it is semidefinite. 

The situation is quite different for indefinite Coxeter groups, as attested by the following theorem of Benoist and de la Harpe. 

\begin{theorem}[{\cite[Th{\'e}or{\`e}me]{BenoistdelaHarpe2004}}] \label{thm:CoxeterDensity}
Let $(W,S)$ be an indefinite irreducible Coxeter system, and let $\rad(B)$ denote the radical of the associated bilinear form $B$. Then the Zariski-closure of $W$ in $\Orth_B$ is precisely the kernel $\Orth_B^1$ of the restriction map $\Orth_B \to \GL_{\rad(B)}: g \mapsto g_{|_{\rad(B)}}$. 
In particular, if $B$ is non-degenerate, then $W$ is Zariski-dense in $\Orth_B$. 
\end{theorem}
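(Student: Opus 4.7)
The plan is to directly analyze the Zariski closure $G$ of $W$ inside $\Orth_B$. Any generator $s \in S$ fixes $\rad(B)$ pointwise: if $v \in \rad(B)$ then $B(v,e_s) = 0$ and hence $s(v) = v - 2B(v,e_s)e_s = v$. Since fixing $\rad(B)$ pointwise is a Zariski-closed condition, $G \subseteq \Orth_B^1$. To prove the reverse inclusion I would use the short exact sequence
\[
1 \to U \to \Orth_B^1 \to \Orth_{\bar B} \to 1,
\]
where $\bar B$ is the non-degenerate form induced by $B$ on $\bar V = V/\rad(B)$ and the unipotent kernel $U$ is canonically isomorphic, as a $W$-module, to $\bar V^* \otimes \rad(B)$ (with the $W$-action trivial on the $\rad(B)$ factor). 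The goal then splits into showing that the image $\bar G$ of $G$ in $\Orth_{\bar B}$ is all of $\Orth_{\bar B}$, and that $G \cap U$ is all of $U$.

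For the first claim, irreducibility of $(W,S)$ ensures that the image $\bar W$ of $W$ in $\Orth_{\bar B}$ acts irreducibly on $\bar V$ while preserving the non-degenerate indefinite form $\bar B$. I would show that the $\bar W$-orbit of a root $e_s$ is Zariski-dense in the quadric $Q = \{u \in \bar V : \bar B(u,u) = \bar B(e_s,e_s)\}$ by combining irreducibility (which precludes proper $\bar W$-invariant linear subspaces) with the unboundedness of the orbit coming from indefiniteness, to exclude proper $\bar G$-invariant subvarieties of $Q$. Since the map $u \mapsto r_u$ from $Q$ to $\Orth_{\bar B}$ is a morphism of algebraic varieties, the corresponding reflections form a Zariski-dense subset of the set of all reflections of $\Orth_{\bar B}$, and by the Cartan--Dieudonn\'e theorem these generate $\Orth_{\bar B}$, yielding $\bar G = \Orth_{\bar B}$.

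For the second claim, $\Lie(G \cap U)$ is a $W$-submodule of $\Lie(U) \cong \bar V^* \otimes \rad(B)$. By Schur-type arguments---using that $\bar V^*$ is an irreducible $W$-module and that $W$ acts trivially on $\rad(B)$---every $W$-submodule has the form $\bar V^* \otimes V_0$ for some subspace $V_0 \subseteq \rad(B)$. The main obstacle will be to rule out $V_0 \subsetneq \rad(B)$: I would do so by constructing explicit elements of $W$ whose Jordan unipotent parts, viewed in $\Orth_B^1$, collectively span all of $\Hom(\bar V, \rad(B))$. This is where the indefiniteness of $B$ is crucial---it is most transparently exploited when some $m_{st} = \infty$ (the pair $\langle s, t \rangle$ then contains an affine shear providing a unipotent direction into $\rad(B)$), but in the purely hyperbolic case it must be extracted more delicately from products of non-commuting reflections whose dynamics on the Tits cone probe every isotropic direction in $\rad(B)$.
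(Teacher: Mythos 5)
First, a point of order: the paper does not prove Theorem \ref{thm:CoxeterDensity} at all. It is imported verbatim from \cite{BenoistdelaHarpe2004} and used as a black box (its only role is to guarantee that $\Orth_B^{1\circ}$ is perfect so that superapproximation applies). So there is no internal proof to compare you against; I can only measure your sketch against the argument of the cited source. Your architecture --- establish $G \subseteq \Orth_B^1$, split along $1 \to U \to \Orth_B^1 \to \Orth_{\overline{B}} \to 1$, and treat the reductive quotient and the unipotent kernel separately --- is the right one and agrees in outline with Benoist--de la Harpe; the easy inclusion and the identification of $U$ with $\overline{V}^* \otimes \rad(B)$ are fine.

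Both substantive steps, however, have genuine gaps. For surjectivity onto $\Orth_{\overline{B}}$: irreducibility of the linear action only excludes proper invariant \emph{linear} subspaces, and unboundedness only excludes the case of a finite orbit closure (the semidefinite case); neither rules out the orbit of $e_s$ lying in a proper invariant subvariety of the quadric, e.g.\ an orbit of a proper noncompact subgroup of $\Orth_{\overline{B}}$ that still acts irreducibly on $\overline{V}$. The real content of this step is a classification-type statement --- an infinite Zariski-closed irreducible subgroup of $\Orth_{\overline{B}}$ generated by (or whose identity component is normalized by) reflections is all of $\Orth_{\overline{B}}$ --- and your sketch essentially assumes its conclusion when asserting density of the orbit. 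For the unipotent part, the plan of exhibiting elements of $W$ whose unipotent Jordan parts span $\Hom(\overline{V}, \rad(B))$ is precisely the step you concede you cannot carry out in the purely hyperbolic case, and the heuristic offered for $m_{st}=\infty$ is incorrect as stated: the parabolic direction of the infinite dihedral group $\langle s,t\rangle$ lies in the radical of the restriction of $B$ to $\Span(e_s,e_t)$, which in general is \emph{not} contained in $\rad(B)$ (indeed $\rad(B)$ may be zero while such pairs exist). A cleaner way to close this step, closer to the source: if $\Lie(G\cap U) = \overline{V}\otimes V_0$ with $V_0 \subsetneq \rad(B)$, then (using vanishing of the relevant $H^1$ for the reductive quotient) $G$ would preserve in $V/V_0$ a complement to $\rad(B)/V_0$, whose pullback is a $W$-invariant subspace of $V$ not contained in $\rad(B)$, contradicting the standard fact (Bourbaki, Ch.~V, \S 4) that for an irreducible Coxeter system every proper $W$-invariant subspace of the geometric representation lies in $\rad(B)$.
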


A consequence of Theorem \ref{thm:CoxeterDensity} is that the connected component $\Orth_B^{1 \circ}$ of the Zariski-closure of the indefinite Coxeter group $W$ is perfect. Indeed, if $B'$ denotes the bilinear form induced by $B$ on $V' = V / \rad(B)$, then $\Orth_B^{1 \circ} \cong \SO_{B'} \ltimes V'^{\dim \rad(B)}$, with the latter being a perfect group because $\SO_{B'}$ is simple and $V'$ is an irreducible $\SO_{B'}$-module. 
This is precisely the ingredient needed for us to apply the following \emph{superapproximation} theorem due to Salehi Golsefidy. We will use it to deduce that congruence quotients of the Cayley graph of an indefinite Coxeter group form a family of expanders. 

\medskip

Fix non-zero integers $N_0$ and $q_0$. For any integer $m$ coprime to $q_0$, let $\pi_m$ denote the quotient map $\GL_{N_0}(\Z[1/q_0]) \to \GL_{N_0}(\Z / m \Z)$ induced by reduction modulo $m$. 

\begin{theorem}[{\cite[Theorem 1]{SalehiGolsefidy2019}}] \label{thm:Superapproximation}
Let $\Gamma$ be the group generated by a finite symmetric subset $S$ of $\GL_{N_0}(\Z[1/q_0])$. Suppose that $\Gamma$ is infinite. Fix $M_0 \in \N$. The family of Cayley graphs $\{\Cay(\pi_m(\Gamma), \pi_m(S))\}_{m}$, as $m$ runs through either
\[
\{p^n \mid n \in \N, p \textrm{ prime}, p \nmid q_0 \}
\quad \textrm{or} \quad
\{m \in \N \mid \gcd(m,q_0) = 1, p^{M_0+1} \nmid m \textrm{ for any prime } p\},
\]
is a family of expanders if and only if the connected component $G^\circ$ of the Zariski-closure $G$ of $\Gamma$ in $\GL_{N_0}$ is perfect. 
\end{theorem}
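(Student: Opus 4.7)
The plan is to attack this via the Bourgain--Gamburd machine, suitably adapted to handle composite and prime-power moduli uniformly. The necessity of perfectness of $G^\circ$ is easy: if $G^\circ$ had a nontrivial abelian quotient algebraic group, then a finite-index subgroup of $\Gamma$ would surject (via congruence reductions) onto arbitrarily large abelian quotients, and Cayley graphs of arbitrarily large abelian groups cannot form an expander family since abelian groups are amenable.

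For sufficiency I would first reduce the second family to the prime-power family. Write $m = p_1^{a_1}\cdots p_k^{a_k}$ with each $a_i \leq M_0$; a Nori--Weisfeiler type strong approximation statement, valid for linear groups with possibly non-semisimple Zariski closure, shows that $\pi_m(\Gamma)$ decomposes (up to bounded index) as $\prod_i \pi_{p_i^{a_i}}(\Gamma)$. Uniform spectral gaps for each factor then combine to give a uniform spectral gap for the product, with the bound $a_i \leq M_0$ ensuring that the structural analysis stays uniform in $m$.

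The heart of the proof is then to establish a spectral gap for $\Cay(\pi_{p^n}(\Gamma), \pi_{p^n}(S))$ uniform in $(p,n)$ with $p \nmid q_0$. I would run the Bourgain--Gamburd programme on the $\ell$-fold convolution of the uniform measure on $S$ with $\ell \sim \log |\pi_{p^n}(\Gamma)|$, combining: (i) \emph{quasi-randomness}, namely a polynomial lower bound on the minimum dimension of nontrivial irreducible representations of $\pi_{p^n}(\Gamma)$, obtained from the structure of finite images of an algebraic group with perfect identity component; (ii) an $\ell^2$-\emph{flattening} or product theorem inside $G(\Z/p^n\Z)$, controlling the interaction of convolutions with approximate subgroups; and (iii) \emph{escape from subvarieties}, ensuring random products do not concentrate inside proper algebraic subgroups of $G$. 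Steps (i) and (iii) preclude concentration on a proper subgroup, step (ii) forces the measure to flatten, and a final appeal to quasi-randomness yields closeness to uniform.

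The main obstacle is step (ii) in this generality: proving an $\ell^2$-flattening theorem for arbitrary linear algebraic groups with perfect connected component, reduced modulo $p^n$, uniformly in both $p$ and $n$. This substantially generalises the product theorems of Helfgott and of Breuillard--Green--Tao (for simple algebraic groups mod primes) and the work of Bourgain--Varj\'u and others on $\SL_d$. Handling both the non-semisimple structure of $G^\circ$ (so that one must analyse the unipotent radical and its interaction with the semisimple part) and the lifting from $\Z/p\Z$ to $\Z/p^n\Z$ uniformly is precisely what makes Salehi Golsefidy's theorem substantially deeper than earlier superapproximation results, and any serious attempt at the proof stands or falls on carrying out this flattening step in full generality.
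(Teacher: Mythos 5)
You should first note that the paper does not prove this statement at all: it is imported verbatim as Theorem~1 of Salehi Golsefidy's superapproximation paper and used as a black box, so there is no internal argument to compare yours against. Judged on its own terms, your proposal is a sensible roadmap of the Bourgain--Gamburd strategy that does underlie the actual proof, and your necessity direction is essentially right (a non-perfect $G^\circ$ forces unboundedly large abelian congruence quotients of a finite-index subgroup of $\Gamma$, which is incompatible with uniform expansion). But as a proof it has two genuine gaps. The decisive one you name yourself: step (ii), the $\ell^2$-flattening/product theorem for a general $G$ with perfect identity component modulo $p^n$, uniformly in both $p$ and $n$ and in the presence of a nontrivial unipotent radical, \emph{is} the theorem; announcing that the argument ``stands or falls'' on it is an accurate self-assessment that it has not been carried out. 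A programme whose central lemma is left as a substantial open generalisation of Helfgott, Breuillard--Green--Tao, Bourgain--Gamburd and Bourgain--Varj\'u is not a proof.

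The second gap is in your reduction of the composite-modulus family to prime powers. By the Chinese remainder theorem $\GL_{N_0}(\Z/m\Z)$ splits as a product over the prime factors of $m$, but $\pi_m(\Gamma)$ is only a subdirect product of the $\pi_{p_i^{a_i}}(\Gamma)$; it need not have bounded index in the full product, and Goursat-type correlations between the factors are exactly what make the square-free and bounded-power cases delicate. Strong approximation in the Nori--Weisfeiler form applies to the simply connected semisimple part and does not by itself deliver the ``up to bounded index'' splitting you assert for a group whose Zariski closure may have a unipotent radical and a disconnected component group. Handling this requires the multi-scale arguments of Salehi Golsefidy--Varj\'u rather than a clean product decomposition followed by a tensor-product spectral argument. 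So both halves of your sufficiency argument rest on results that are not supplied and are not routine.
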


In order to apply Theorem \ref{thm:Superapproximation} to an indefinite Coxeter group $W$, it remains for us to observe that $W$ can indeed be seen as a subgroup of $\GL_{N_0}(\Z[1/q_0])$. The attentive reader may foresee Weil's trick of restricting scalars. 
The entries of the matrix of $2B$ in the canonical basis of $V$ are algebraic integers, and so there exists a number field $K$, with ring of integers $\cO_K$, over which the algebraic group $\Orth_B$ can be defined in such a way $W \subset \Orth_B(\cO_K)$. The restriction of scalars $\Res_{K/\Q}(\Orth_B)$ is a linear algebraic $\Q$-group, and as such can be embedded over $\Q$ in $\GL_{N_0}$ for some $N_0$. If one is careful with the construction of $\Res_{K/\Q}(\Orth_B)$ and the choice of the embedding in $\GL_{N_0}$, then one can ensure that the image of $W$ lies in $\GL_{N_0}(\Z)$. Otherwise, let $q_0$ be a lowest common denominator of the entries of the image of $S$. Then $S$, and hence also $W$, is a subset of $\GL_{N_0}(\Z[1/q_0])$. The Zariski-closure of $W$ in $\GL_{N_0}$ is the image of $\Res_{K/\Q}(\Orth_B^{1\circ})$, which is perfect since $\Orth_B^{1\circ}$ is perfect. 
\medskip

On the way, we also record the following very useful corollary to Theorem \ref{thm:Superapproximation}, which would already be sufficient to construct expanders from an indefinite Coxeter group. 

\begin{corollary}\label{cor:superapproximation}
Let $\Gamma$ be a linear group (in characteristic 0) generated by a finite symmetric set $S$. Suppose that $\Gamma$ is not virtually solvable. Then there exists a collection $\{N_m\}_{m \in I}$ of normal subgroups of $\Gamma$ whose indices are unbounded, and for which the Cayley graphs $\Cay(\pi_m(\Gamma), \pi_m(S))$ form a family of expanders, where $\pi_m$ denotes the quotient map $\Gamma \to \Gamma / N_m$. 
\end{corollary}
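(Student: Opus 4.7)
The plan is to reduce Corollary~\ref{cor:superapproximation} to Theorem~\ref{thm:Superapproximation} by passing to a semisimple algebraic quotient of $\Gamma$, where superapproximation does apply.

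First, I would arrange for $\Gamma$ to sit inside $\GL_{N_0}(\Z[1/q_0])$ for some integers $N_0, q_0$. Since $\Gamma$ is finitely generated and not virtually solvable, a standard specialisation argument (in the spirit of Mal'cev's proof that finitely generated linear groups in characteristic~$0$ are residually finite, using Tits-style ping-pong to preserve faithfulness when specialising any transcendental parameters) produces a faithful representation of $\Gamma$ into $\GL_{N_0}(K)$ for some number field $K$. The restriction-of-scalars construction recalled just before the statement of the corollary then embeds $\Gamma$ inside some $\GL_{N_1}(\Z[1/q_1])$.

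Next, let $G$ be the Zariski closure of $\Gamma$ in $\GL_{N_1}$, let $G^\circ$ be its identity component, and let $R$ denote the solvable radical of $G^\circ$. Both $G^\circ$ and $R$ are characteristic in $G$, so $R$ is normal in $G$; and because $\Gamma$ is not virtually solvable, $G^\circ$ is not solvable. Hence the quotient $H := G^\circ/R$ is a non-trivial semisimple algebraic group; in particular, $H$ is perfect and coincides with the identity component of $G/R$. Fix an embedding of the affine $\Q$-group $G/R$ into some $\GL_{N_2}$ over $\Q$, and let $\varphi: G \to G/R \hookrightarrow \GL_{N_2}$ denote the resulting composition. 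After clearing denominators of the elements of $\varphi(S)$, the image $\Gamma' := \varphi(\Gamma)$ is a finitely generated subgroup of $\GL_{N_2}(\Z[1/q_2])$ generated by the finite symmetric set $\varphi(S)$, is infinite, and has a Zariski closure in $\GL_{N_2}$ with perfect identity component $H$. Applying Theorem~\ref{thm:Superapproximation} to $\Gamma'$ with generating set $\varphi(S)$ then yields a family $\{N'_m\}_{m \in I}$ of normal subgroups of $\Gamma'$ with unbounded indices for which the associated Cayley graphs are expanders.

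To conclude, I pull back: the subgroups $N_m := \varphi^{-1}(N'_m) \cap \Gamma$ are normal in $\Gamma$, and $\varphi$ induces isomorphisms $\Gamma/N_m \cong \Gamma'/N'_m$ sending $\pi_m(S)$ to the reduction of $\varphi(S)$ modulo $N'_m$; consequently the Cayley graphs $\Cay(\pi_m(\Gamma), \pi_m(S))$ are isomorphic to those of $\Gamma'/N'_m$ and therefore form the desired family of expander quotients with $[\Gamma:N_m] = [\Gamma':N'_m]$ unbounded. The main technical obstacle is the initial reduction to $\GL_{N_0}(\Z[1/q_0])$, which must handle possibly transcendental entries via specialisation; the remainder is a straightforward combination of the structure theory of linear algebraic groups with a pullback of the congruence-type normal subgroups produced by superapproximation.
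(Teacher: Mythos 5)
Your overall architecture matches the paper's proof exactly: specialise to a number field, restrict scalars to land in some $\GL_{N}(\Z[1/q])$, kill the solvable radical of the Zariski closure to reach a group with perfect (indeed semisimple) identity component, apply Theorem~\ref{thm:Superapproximation} there, and pull the resulting normal subgroups back to $\Gamma$. The final pullback step is also handled correctly: since the Cayley graph of $\Gamma/N_m$ with respect to $\pi_m(S)$ is canonically isomorphic to that of $\Gamma'/N'_m$ with respect to the image of $S$, no injectivity of $\varphi$ is needed there.

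There is, however, a genuine gap in your first step. You claim that a specialisation argument produces a \emph{faithful} representation of $\Gamma$ over a number field. This is stronger than what Mal'cev-type specialisation delivers and is not true in general: a specialisation of the finitely generated $\Q$-algebra $A$ generated by the matrix entries corresponds to a $\overline{\Q}$-point of $\mathrm{Spec}\,A$, and one must avoid, for every nontrivial $\gamma\in\Gamma$, the proper closed subset where $\gamma$ specialises to the identity. That is countably many proper closed subsets, and since $\overline{\Q}$ is countable they can exhaust all $\overline{\Q}$-points (already $\mathbb{A}^1(\overline{\Q})$ is a countable union of points), so no faithful specialisation need exist; the ``Tits-style ping-pong'' appeal does not rescue this. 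The correct (and sufficient) statement, which is what the paper uses, is that some specialisation $\varphi$ preserves the property of \emph{not being virtually solvable} — this is \cite[Proposition~2.2]{LubotzkyMann1991}. That weaker property is all your argument actually requires: it guarantees that the identity component of the Zariski closure of $\varphi(\Gamma)$ is not solvable, hence that the semisimple quotient $H$ is nontrivial and $\Gamma'$ is infinite, and your pullback at the end is already written so as not to use injectivity of $\varphi$. Replacing ``faithful'' by ``image not virtually solvable'' (with the Lubotzky--Mann citation) repairs the proof and brings it into line with the paper's.
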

\begin{proof}
Let $F$ be a field of characteristic 0 such that $\Gamma$ is a subgroup of $\GL_{N}(F)$, and let $A$ be the $\Q$-subalgebra of $F$ generated by the entries of the elements of $\Gamma$. By assumption, $A$ is a finitely generated $\Q$-algebra. 
There exists a morphism $A \to \overline{\Q}$ inducing a map $\varphi: \GL_{N}(A) \to \GL_{N}(\overline{\Q})$, for which the image $\varphi(\Gamma)$ of $\Gamma$ in $\GL_{N}(\overline{\Q})$ is still not virtually solvable \cite[Proposition~2.2]{LubotzkyMann1991}. Since $\Gamma$ is finitely generated, $\varphi(\Gamma)$ lies in some number field $K$. 
After restricting scalars from $K$ down to $\Q$ if necessary, we may assume that $\varphi(\Gamma)$ actually lies in $\GL_{N'}(\Q)$. 

Let $G$ be the Zariski-closure of $\varphi(\Gamma)$ in $\GL_{N'}$. Let $R$ denote the solvable radical of $G$ and $\psi: G \to G / R$ the quotient map onto the $\Q$-group $G/R$. By construction, the connected component $H$ of $G/R$ is a semisimple $\Q$-group. 
If $H$ were trivial, $G$ would be a finite extension of the solvable group $R$, hence $\varphi(\Gamma)$ would be virtually solvable. We deduce that $\Gamma' = \psi(\varphi(\Gamma))$ is Zariski-dense in the $\Q$-group $G/R$, whose nontrivial connected component is semisimple hence perfect. 

Now embed $G/R$ into $\GL_{N''}$ over $\Q$ for some $N''$, and apply Theorem \ref{thm:Superapproximation} to obtain a collection of congruence subgroups $\{N_m\}_{m \in I}$ of $\Gamma'$ whose indices in $\Gamma'$ are unbounded, and for which the Cayley graphs of the quotients $\Gamma' / N_m$ are expanders. The preimage in $\Gamma$ of $\{N_m\}_{m \in I}$ then verifies the statement of the corollary. 
\end{proof}
\medskip

We now have all the pieces to prove Theorem \ref{thm:Main}.

\subsection{Proof of Theorem \ref{thm:Main}} \label{subsec:proof-Main}
Let $(W,S)$, $\cP_{W,M}$ and $X$ be as in the statement of the theorem. 

Applying Theorem \ref{thm:Superapproximation} and the surrounding discussion to $(W,S)$, we find a collection of congruence subgroups $N_m = \ker \pi_m$ of $W$ (with respect to the restriction of scalars of the geometric representation), for which $\{\Cay(\pi_m(W),\pi_m(S))\}_m$ forms a family of expanders, as $m$ runs through
\[
I = \{p^n \mid n \in \N, \ p \in \N \textrm{ prime}\}
\ \cup \ 
\{m \in \N \mid p^{M_0+1} \nmid m \textrm{ for any prime } p \in \N\}. 
\]

As shown in \S\ref{subsec:comparing-quotients}, there are quasi-isometries $f_m: \Cay(\pi_m(W),\pi_m(S)) \to X / N_m$ with constants depending only on $(W,S)$. By Corollary \ref{cor:comparing-expansion}, the graphs $\{X / N_m\}_{m \in I}$ form a family of expanders. 

We can refine the choice of the subset $I'$ in \S\ref{subsec:comparing-regularity} as follows. 
The finite set $T$ from \S\ref{subsec:comparing-regularity} intersects $N_p$ nontrivially only for finitely many primes $p$, and for each of those, $T$ intersects $N_{p^n}$ nontrivially only for finitely many $n$. It follows that the set $\{m \in I \mid T \cap N_m = \{1\}\}$ is cofinite in $I$, and in view of \S\ref{subsec:comparing-regularity}, a fortiori so is the set
\[
I' = \{m \in I \mid X / N_m \textrm{ has the same regularity as } X\}.
\]

Altogether, the graphs $\{X / N_m\}_{m \in I'}$ are $(a_0, \dots, a_n)$-regular, and form an infinite family of expanders. 
This concludes the proof.

\subsection{Proof of Corollary \ref{cor:Main-string}} \label{subsec:proof-cor-Main-string}

As already mentioned in \S \ref{sec:introduction}, to deduce Corollary \ref{cor:Main-string} from Theorem \ref{thm:Main}, it suffices to first apply Lemma \ref{lem:higher-regularity} to the universal polytope of the string Coxeter system, which is a regular polytope. 

\begin{remark} \label{rem:expanders-nonexpanders} 
It should be stressed that quotients of indefinite Coxeter groups give rise to both expanders and non-expanders. Most families of subgroups do not give expanders; only careful choice, as with the congruence subgroups used in the proof of Theorem \ref{thm:Main}, leads to expanders. 
More precisely, Coxeter groups which are lattices in $\Orth_{n,1}$ for some $n \in \NN$ (for example $\HH_5$, or any of the groups from Table \ref{tab:regular-hyperbolic-honeycombs}) have finite-index subgroups which map onto a nonabelian free group (see \cite[Corollary 3.6]{Lubotzky1996}). Noskov and Vinberg \cite{NoskovVinberg2002} showed that this even holds for all finitely generated subgroups of general Coxeter groups, provided they are not virtually abelian (in particular, this holds for indefinite Coxeter groups). Such finite-index subgroups have infinite residually finite amenable quotients which lead to non-expanding finite quotients. 
\end{remark}

\section{High-dimensional expanders} \label{sec:HDX}

As mentioned in \S \ref{subsec:higher-regularity}, it is natural to think about a $(a_0,\dots,a_{n-1})$-regular graph $X$ as an $n$-dimensional simplicial complex, more precisely as the $n$-skeleton $X^{(n)}$ of the clique complex $X^\textrm{cl}$ of $X$, of which $X$ is the 1-skeleton. 
In the case when $X$ is an expander graph, it is natural to wonder whether $X^{(n)}$ has high-dimensional expansion properties. 

The notion and study of high-dimensional expanders (HDX) has been very popular in recent years with many different definitions  which are not equivalent to each other in general (see \cite{Lubotzky-2018} and the references therein). 
There is a priori no reason to believe that if $X$ is an expander graph, then $X^{(n)}$ is an HDX in any of the definitions. 
Nevertheless, we observe that Proposition \ref{lem:comparing-expansion} yields some ``high-dimensional information''. In \cite{Kaufman-Mass-2017} and \cite{Dinur-Kaufman-2017}, a systematic study of $i$-walks on a simplicial complex $Y$ was initiated. Here an $i$-walk means a walk on the $i$-cells, where $i$-cells are adjacent if they are contained in a common $(i+1)$-cell. The following basic question was asked in \cite{Kaufman-Mass-2017}: 
`Are there bounded degree high-dimensional simplicial complexes in which all the high order random walks converge rapidly to their stationary distribution?'
Proposition \ref{lem:comparing-expansion} gives us a quick answer to this. 

\begin{corollary}\label{cor:HDX}
Let $Y$ be a bounded-degree connected simplicial complex of dimension $d$. Let $\overline{Y(i)}$ be the graph whose vertices are the $i$-cells of $Y$ and where two $i$-cells are adjacent if they are contained in a common $(i+1)$-cell. 
Then for every $0 \leq i < d$ such that $\overline{Y(i)}$ is connected, $\overline{Y(i)}$ is an expander graph if and only if $\overline{Y(0)}$ (that is, the $1$-skeleton of $Y$) is an expander graph. 
\end{corollary}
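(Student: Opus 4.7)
The plan is to construct a quasi-isometry $f : \overline{Y(i)} \to \overline{Y(0)}$ whose constants depend only on $d$, $i$, and the degree bound $K$ of $Y$; once this is in hand, the conclusion follows directly from Proposition \ref{lem:comparing-expansion} applied to $f$ and to a quasi-inverse of $f$. The case $i = 0$ is tautological since $\overline{Y(0)}$ is the $1$-skeleton itself, so fix $1 \leq i < d$ with $\overline{Y(i)}$ connected.

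I would define $f$ by choosing, for each $i$-cell $\sigma$, a vertex $v(\sigma) \in \sigma$, and setting $f(\sigma) = v(\sigma)$. The easy direction is Lipschitz continuity: if $\sigma$ and $\sigma'$ are adjacent in $\overline{Y(i)}$, they belong to a common $(i+1)$-simplex of $Y$, so $v(\sigma)$ and $v(\sigma')$ are either equal or joined by an edge of $\overline{Y(0)}$. The bounded-degree hypothesis on $Y$ also controls the size of the fibres of $f$, bounding the number of $i$-cells through any given vertex by a constant $K' = K'(d,i,K)$.

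For the reverse estimate and coarse surjectivity, the plan is to exploit that every vertex of $Y$ lies in some $i$-cell and every $1$-cell of $Y$ lies in some $(i+1)$-simplex (both of which are automatic once $Y$ is pure of dimension $d$ and $i < d$, which is the setting of interest). Given this, along any geodesic $v_0, v_1, \dots, v_n$ in $\overline{Y(0)}$, for each $j$ I would pick an $(i+1)$-simplex $\tau_j$ containing the edge $\{v_j, v_{j+1}\}$, and within $\tau_j$ pick $i$-cells through $v_j$ and $v_{j+1}$ respectively; these are then automatically adjacent in $\overline{Y(i)}$. Combining this with a bounded-diameter bridge within the set of $i$-cells through any fixed vertex (controlled by $K'$) would yield the estimate $d_{\overline{Y(i)}}(\sigma, \sigma') \leq C\,d_{\overline{Y(0)}}(f(\sigma), f(\sigma')) + C$ with $C = C(d,i,K)$, and similarly that every vertex of $\overline{Y(0)}$ lies within bounded distance of the image of $f$.

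The hard part is this last local-to-global step: one must bound, in terms of $d$, $i$, and $K$ alone, the $\overline{Y(i)}$-distance between any two $i$-cells sharing a common vertex. This amounts to a local connectivity statement about $\overline{Y(i)}$ near each vertex, and rests on sufficient higher-dimensional regularity of $Y$ (e.g.\ on the connectedness of appropriate links). Once those local bridges are established, concatenation along a geodesic is routine, and the expander equivalence drops out of Proposition \ref{lem:comparing-expansion} applied in both directions.
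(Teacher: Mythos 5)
Your approach is exactly the paper's: the proof given there consists of the single assertion that, for bounded-degree $Y$ with $\overline{Y(i)}$ connected, $\overline{Y(i)}$ is quasi-isometric to $\overline{Y(0)}$ with constants depending only on the degree bound, followed by an appeal to Proposition \ref{lem:comparing-expansion}. You have fleshed out the natural map and correctly isolated the one step carrying all the content, namely a uniform bound on $d_{\overline{Y(i)}}(\sigma,\sigma')$ for two $i$-cells sharing a vertex. But you leave that step open, so your proof is incomplete as written --- and the step genuinely does not follow from the stated hypotheses. Take two copies of a good $2$-dimensional expander complex, glue them along a large independent set of vertices (so that no edges or triangles are identified), and add a single bridging triangle $\{v,a,b\}$ with $\{v,a\}$ an edge of the first copy and $\{v,b\}$ an edge of the second. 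The resulting $Y$ is pure, connected and of bounded degree, $\overline{Y(0)}$ is still a uniform expander, and $\overline{Y(1)}$ is connected; yet $\overline{Y(1)}$ has a cut of size $2$ separating two pieces of linear size, so the Cheeger constants degenerate, and the two edges $\{v',a'\}$, $\{v',b'\}$ through any other glued vertex $v'$ are at unbounded $\overline{Y(1)}$-distance. So some local connectivity hypothesis is indispensable, both for your local bridges and for the statement itself when it is read, as it must be, with constants depending only on the degree bound.

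The good news is that once you impose the natural hypothesis --- that for every vertex $v$ the graph $\overline{\mathrm{lk}(v)(i-1)}$ (whose vertices are the $i$-cells of $Y$ through $v$, adjacent when they lie in a common $(i+1)$-cell through $v$) is connected --- your argument closes with no further work. Indeed, by the degree bound this graph has at most $K$ vertices, so being connected it has diameter at most $K$, and any path in it is a path in $\overline{Y(i)}$; this is the local bridge, with a constant depending only on $K$. Concatenation along a geodesic, the fibre bound, and coarse surjectivity (under purity, which you also correctly flag as needed to guarantee that every edge lies in an $(i+1)$-cell and every vertex in an $i$-cell) then give the uniform quasi-isometry, and Proposition \ref{lem:comparing-expansion} applied to $f$ and to a quasi-inverse finishes the proof. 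State these extra hypotheses explicitly; they do hold in all the complexes to which the paper actually applies this corollary, but neither the corollary as stated nor the paper's one-line proof records them.
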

\begin{proof}
When $Y$ is of bounded degree and $\overline{Y(i)}$ is connected, $\overline{Y(i)}$ is quasi-isometric to $\overline{Y(0)}$ (with constants depending only on the degree bound), and the statement follows from Proposition \ref{lem:comparing-expansion}.
\end{proof}

Recall that the (lazy) random walk on an expander graph converges rapidly to the stationary distribution. So formally speaking, Corollary \ref{cor:HDX} answers the above question, since there are various ways to construct complexes $Y$ satisfying its hypotheses (and indeed for some of them \cite{Kaufman-Mass-2017} proved this).
However, experience with HDX shows that one needs quantitative results (of the type given in \cite{Kaufman-Mass-2017,Dinur-Kaufman-2017}) for concrete applications. 

\medskip

Finally, let us mention another interesting remark related to high-dimensional expanders. As was explained in \S\ref{sec:density-superapproximation}, our expander graphs are obtained by applying super-approximation to normal congruence subgroups of the Coxeter system $(W,S)$. 
The Coxeter systems studied here are all known to have finite index subgroups which map onto non-abelian free groups \cite{NoskovVinberg2002}. This implies that at the same time, we could choose suitable (non congruence) normal subgroups which give rise to HR-graphs with exactly the same local structure but which are not expanders.

This is of interest in light of Garland theory \cite{Garland-1973}. Garland theory shows that the $1$-skeleton of simplicial complexes are expanders when the links are ``sufficiently good'' expanders. 
Our examples show that in fact the links need to be strong enough expanders in order to deduce global expansion. 
Thus the popular statement saying that Garland theory implies that expansion of high-dimensional simplicial complexes is a local property should be formulated in a careful and quantitative way.

\section{Highly regular honeycombs in hyperbolic space} \label{sec:highly-regular-polytopes}

In this section, we illustrate how Theorem \ref{thm:Main} can be used to produce examples of highly regular expander graphs. In particular, these examples will prove Theorem \ref{thm:levels}. 

As we have seen in \S\ref{sec:comparing-graphs} and \S\ref{sec:density-superapproximation}, in order to obtain expander graphs from the 1-skeleton of a Wythoffian polytope $\cP_{W,M}$ with finite vertex links, the Coxeter system $(W,S)$ should be indefinite. In particular, this happens when $\cP_{W,M}$ is a tessellation of hyperbolic space. 
At the same time, to obtain a graph of regularity level $n$, it would suffice that $\cP_{W,M}$ has all faces of rank $n+1$ simplicial, and that the 1-skeleton of the links of faces of rank $\leq n$ are transitive graphs. 
When $\cP_{W,M}$ is a regular polytope, it suffices that a single face of rank $n+1$ is simplicial (as the second condition is superseded by Lemma \ref{lem:higher-regularity}). 

\subsection{Regular tessellations of hyperbolic space} \label{subsec:regular-hyperbolic-tesselations}
The hyperbolic plane can be tessellated by regular triangles of angles $2\pi / a$, for every integer $a > 6$. The stabiliser in $\PGL_2(\R)$ of such a tiling $\cT_a$ is a cocompact lattice, the so-called \emph{$(2,3,a)$-triangle group}, here denoted by $D_a$. 
The quotients of $D_a$ (or equivalently, of $\cT_a$) give rise to infinitely many $(a,2)$-connected regular graphs, as pointed out in \cite[Example 1.2(5) \& Section 6]{ChapmanLinialPeled2019}. 
But Corollary \ref{cor:Main-string} says further that given $a > 6$, infinitely many of these $(a,2)$-regular graphs (namely the ones obtained from congruence subgroups in the proof of the theorem) form a family of expanders. 
(It is worth pointing out that only for finitely many values of $a$ the triangle group $D_a$ is an arithmetic lattice; see \cite{Takeuchi1977}. So the meaning of `congruence subgroups' is as given by the proof.)
Moreover, for each $a > 6$, the group $D_a$ has a finite index subgroup which is the fundamental group of a Riemann surface and hence maps onto a nonabelian free group, and therefore has many quotients which do not yield expanders (cf.~Remark \ref{rem:expanders-nonexpanders}). 

Sadly, tessellations of hyperbolic space by regular (possibly ideal) polytopes are scarce in dimensions 3 and above. 
In contrast with the hyperbolic plane, there are only finitely many regular hyperbolic tessellations in dimensions 3, 4 and 5, and none in dimensions 6 and above. 
This fact was already know to Schlegel \cite{Schlegel1883}, who initiated their study. The full list can be found in \cite{Coxeter1956}. 
A quick look through the tables suggests the following candidates, the regularity of which can be computed using Lemma \ref{lem:higher-regularity}. 

\subsection{Table} \label{tab:regular-hyperbolic-honeycombs}
Some regular hyperbolic tessellations. 
\begin{center}
\begin{tabular}{@{}llll@{}}
\hline
    Regular polytope $\cP$ & Schl{\" a}fli symbol 
& Regularity of $X$ \\ \hline
    Icosahedral honeycomb & $\{3,5,3\}$ 
& $(20,3,0)$\\
    Order-5 4-simplicial honeycomb & $\{3,3,3,5\}$ 
& $(120,12,5,2,0)$ \\
    Pentagrammic-order hexacosichoric honeycomb & $\{3,3,5,5/2\}$ 
& $(120,12,5,0)$ \\
    Order-5 icosahedral hecatonicosachoric honeycomb & $\{3,5,5/2,5\}$
& $(120,12,0)$ \\
    Order-3 5-orthoplicial honeycomb & $\{3,3,3,4,3\}$ 
& $(\aleph_0,24,8,3,0)$ \\ 
    Order-3 4-orthoplicial honeycomb honeycomb & $\{3,3,4,3,3\}$ 
& $(\aleph_0,16,4,0)$ \\ 
    Order-3 icositetrachoric honeycomb honeycomb & $\{3,4,3,3,3\}$
& $(32,5,0)$ \\
\hline
\end{tabular}
\end{center}

\begin{remark} \label{rem:limited-regular-examples}
The third example is a faceting of the second one. Hence there is essentially one example of hyperbolic tessellation whose $1$-skeleton is $(a_0,a_1,a_2)$-regular with $a_0 \in \N$ and $a_2 \neq 0$, namely the one with Schl{\" a}fli symbol $\{3,3,3,5\}$ and Coxeter diagram 
\begin{tikzpicture}[scale=0.7, transform shape]
\tikzstyle{every node}=[draw,solid,draw=black,fill=black,shape=circle,minimum size=.05cm,inner sep=.05cm]
\path[solid,draw=black,fill=white,thin] 
(0,0) node[double] (root 1) {} --++(.5,0) node (root 2) {} --++(.5,0) node (root 3) {} --++(.5,0) node (root 4) {} -- node[draw=none,fill=none,shape=circle,above]{5} ++(.5,0) node (root 5) {};
\end{tikzpicture}. 

In fact, even among arbitrary indefinite string Coxeter systems, this is the only example which can yield an infinite family of $(a_0,a_1,a_2)$-regular quotient graphs (with $a_0 \in \N$ and $a_2 \neq 0$). 
Indeed to achieve this, the diagram of the Coxeter system $(W,S)$ should start with at least two consecutive edges labelled 3, while the stabiliser $W_0$ of a given vertex of $\cP_W$ (whose Coxeter diagram is obtained by removing the 0\textsuperscript{th} vertex and its edge from the diagram of $(W,S)$) should be finite (see Remark \ref{rem:regularity-Schlafli-symbol} and \S\ref{subsec:Wythoffian-polytopes}). 
In other words, the Coxeter diagram of $(W,S)$ should be obtained by adding an edge labelled 3 to the string diagram of a finite Coxeter group $W_0$ which already starts with at least one edge labelled 3, in such a way that the resulting group is infinite. 
The only possible candidates for the finite group are 
\begin{tikzpicture}[scale=0.7, transform shape]
\tikzstyle{every node}=[draw,solid,draw=black,fill=black,shape=circle,minimum size=.05cm,inner sep=.05cm]
\path[solid,draw=black,fill=white,thin] 
(0,0) node (root 1) {} --++(.5,0) node (root 2) {} -- node[draw=none,fill=none,shape=circle,above]{4} ++(.5,0) node (root 3) {} --++(.5,0) node (root 4) {};
\end{tikzpicture} ($\rF_4$) 
and 
\begin{tikzpicture}[scale=0.7, transform shape]
\tikzstyle{every node}=[draw,solid,draw=black,fill=black,shape=circle,minimum size=.05cm,inner sep=.05cm]
\path[solid,draw=black,fill=white,thin] 
(0,0) node (root 1) {} --++(.5,0) node (root 2) {} --++(.5,0) node (root 3) {} -- node[draw=none,fill=none,shape=circle,above]{5} ++(.5,0) node (root 4) {};
\end{tikzpicture} ($\rH_4$). 
Unfortunately, extending the former diagram yields 
\begin{tikzpicture}[scale=0.7, transform shape]
\tikzstyle{every node}=[draw,solid,draw=black,fill=black,shape=circle,minimum size=.05cm,inner sep=.05cm]
\path[solid,draw=black,fill=white,thin] 
(0,0) node (root 1) {} --++(.5,0) node (root 2) {} --++(.5,0) node (root 3) {} -- node[draw=none,fill=none,shape=circle,above]{4} ++(.5,0) node (root 4) {} --++(.5,0) node (root 5) {};
\end{tikzpicture} ($\tilde{\rF}_4$), 
whose Coxeter system is semidefinite: it is the affine Weyl group of type $\tilde{\rF}_4$. 

Aside from the triangular tilings of the hyperbolic plane already mentioned, the only regular hyperbolic tessellations to yield $(a_0,a_1)$-regular graphs (with $a_0 \in \N$, $a_1 \neq 0$) are included in the table above. 
\end{remark}

\subsection{The order-5 4-simplex honeycomb} \label{subsec:order-5-4-simplex-honeycomb}
As an illustration, we work out the most noteworthy regular example. In this subsection, let thus $\cP$ denote the order-5 4-simplex honeycomb, the automorphism group of which is the Coxeter group $W$ with diagram 
\begin{tikzpicture}[scale=0.7, transform shape]
\tikzstyle{every node}=[draw,solid,draw=black,fill=black,shape=circle,minimum size=.05cm,inner sep=.05cm]
\path[solid,draw=black,fill=white,thin] 
(0,0) node (root 1) {} --++(.5,0) node (root 2) {} --++(.5,0) node (root 3) {} --++(.5,0) node (root 4) {} -- node[draw=none,fill=none,shape=circle,above]{5} ++(.5,0) node (root 5) {};
\end{tikzpicture}, 
sometimes known as $\rH_5$. This example already answers the question asked in \cite{ChapmanLinialPeled2019} positively.

Let $\varphi = \frac{1+\sqrt{5}}{2} \in \R$ and let $K = \Q(\varphi)$. The bilinear form $B$ on $\R^5$ associated with $W$ has matrix
\[
\frac{1}{2}
\begin{smallpmatrix}
2 & -1 & 0 & 0 & 0 \\
-1 & 2 & -1 & 0 & 0 \\
0 & -1 & 2 & -1 & 0 \\
0 & 0 & -1 & 2 & -\varphi \\
0 & 0 & 0 & -\varphi & 2
\end{smallpmatrix}
\]
with respect to the canonical basis $\{e_0, \dots, e_4\}$. It is an easy exercise to see that $B$ is equivalent over $K$ to the diagonal form $B' = \langle 1, 1, 1, 1, -\varphi \rangle$. It follows that $\Orth_B \cong \Orth_{B'}$ as algebraic $K$-groups, and $W$ has signature $(+^4, -)$. 

The 2-sheeted hyperbola $\{v \in \R^5 \mid B(v,v) = -1\}$ is preserved by $\Orth_B$, and each of the two sheets $\cH$ and $\cH^-$ is preserved by the group $W$. The space $\cH$ (or $\cH^-$ for that matter) is the Minkowski model for hyperbolic 4-space; its isometry group is $\Orth_B^+(\R) = \{g \in \Orth_B(\R) \mid g \cH = \cH\} \xrightarrow{\raisebox{-0.7ex}[0ex][0ex]{$\sim$}} \PO_B(\R)$. 

Next, let $\Orth_B(\cO_K)$, $\Orth_B^+(\cO_K)$ and $\SO_B(\cO_K)$ respectively denote the matrices in $\Orth_B(K)$, $\Orth_B^+(K)$ and $\SO_B(K)$, with entries in the ring of integers $\cO_K$ of $K$. By construction of the geometric representation (see \S\ref{subsec:Coxeter-geometric-representation}), the images of the generators $\{s_0, \dots, s_4\}$ of $W$ lie in $\Orth_B^+(\cO_K)$.
Each generator acts on $\cH$ as a hyperbolic reflection. The hyperplane arrangement generated by these reflections tessellates $\cH$ by compact 4-simplices, and this tessellation is a geometric representation of the Coxeter complex of $W$. 

The order-5 4-simplex honeycomb $\cP$ can be recovered by regrouping the tiles around each vertex of type $4$. Alternatively, $\cP$ can be obtained by playing kaleidoscope with a point placed on the hyperplanes associated with $s_1, \dots, s_4$ but not $s_0$, equidistantly from the surrounding hyperplanes. 
The link $L$ of a vertex of $\cP$ is a hexacosichoron (600-cell), which has 120 vertices, 720 edges and 1200 faces. At each vertex of $L$, 12 edges meet, with 5 faces around each of those edges and 2 cells containing each such face. The link of an edge of $\cP$ is an icosahedron. 

It follows from these geometric observations that $W$ is a cocompact lattice in $\Orth_B(\R)$. In this case, the conclusion of Theorem \ref{thm:CoxeterDensity} can also be obtained directly by applying Borel's density theorem \cite{Borel1960}: $W$ is Zariski-dense in $\Orth_B$. 
It also follows that $W$ has finite index in $\Orth_B(\cO_K)$. Indeed, by a classical theorem of Borel and Harish-Chandra, $\Orth_B(\cO_K)$ is also a lattice in the group $\Orth_B(\R)$. 
Alternatively, since $\Orth_B(\cO_K)$ is a discrete subgroup of $\Orth_B(\R)$ containing the lattice $W$, it must be lattice.

One can now apply Corollary \ref{cor:Main-string} (see also \S \ref{subsec:proof-Main}) to construct the finite $(120,12,5,2)$-regular congruence quotients of the 1-skeleton $X$ of $\cP$, which form an infinite family of expanders. 



\begin{remark}
In the example considered in \S\ref{subsec:order-5-4-simplex-honeycomb} (but also for other hyperbolic tilings), the subgroups $N_{m} = W \cap \ker \pi_{m}$ can be described explicitly via the geometric realisation. 
They act on $\cH$, and if $N_{m}$ is chosen appropriately (namely as in \S\ref{subsec:comparing-regularity}, with the additional requirement of being torsion-free),  
then the quotient $\cH / N_{m}$ is a compact hyperbolic 4-manifold which admits a triangulation (descending from the honeycomb) whose $1$-skeleton is precisely the graph $X / N_{m}$. 

Of course, the manifolds $\cH / N_{m}$ all cover the orbifold $\cH / W$. It is possible that in fact all but finitely many cover the manifold $\cH / \Sigma$ from \cite[\S5]{ConderMaclachlan2005}, which to this day achieves the smallest known volume among compact hyperbolic 4-manifolds. 
\end{remark}

\subsection{Wythoffian tessellations of hyperbolic space} \label{subsec:Wythoffian-hyperbolic-tesselations}
In addition to the regular ones, one can construct examples using more general {Wythoffian polytopes} (see \S \ref{subsec:Wythoffian-polytopes}). 
A strategy to obtain Wythoffian polytopes $\cP_{W,M}$ with a 1-skeleton $X$ of regularity level $n+1$, to which one can apply Theorem \ref{thm:Main}, goes as follows.

\smallskip

First, to ensure that all $(n+1)$-faces are simplices, the set $M$ should consist of a single vertex $s_0$ of the Coxeter diagram of $(W,S)$, and any connected subdiagram of size $n$ containing $s_0$ should be a path starting at $s_0$ containing only unlabelled edges. In consequence, $s_0$ is a leaf of the Coxeter diagram, and in the diagram there is a unique path starting at $s_0$ of length $n-1$. 

When all $(n+1)$-faces of $\cP_{W,M}$ are simplices, the $(n+1)$-skeleta of $\cP_{W,M}$ and of $X^\textrm{cl}$ coincide. In particular, if $X$ happens to have regularity level at least $i+1$, its $i$\textsuperscript{th} regularity degree $a_i$ equals the number of $(i+1)$-faces containing any $i$-face, or in other words, equals the size of the link of any $i$-face in $\cP_{W,M}$ ($i \leq n$). 

Now to check that the 1-skeleton $X$ of such a polytope $\cP_{W,M}$ indeed has regularity level $n$, we argue as follows. Since $s_0$ lies at the end of a unique path of length $n-1$ in the diagram, one sees inductively that for $-1 \leq i \leq n-1$, the link of an $i$-face is again a Wythoffian polytope, hence is vertex-transitive. This implies that the sphere $S_X(C)$ of radius 1 around any clique $C$ of size $i+1$ in $X$ is a vertex-transitive graph. In particular, it is $a_{i+1}$-regular, with $a_{i+1}$ again counting the number of $(i + 2)$-faces containing a given $(i+1)$-face.
This analysis also shows that the link of an $i$-face remains connected as long as $i \leq n-1$, so that $X$ is $(a_0, \dots, a_{n-1})$-connected regular. 

\smallskip

Second, the vertex links of $\cP_{W,M}$ are finite precisely when the subgroup $W_0= \langle S \setminus M \rangle$ of $W$ is finite, or in other words, when the Coxeter diagram obtained by removing $M$ and the edges containing it is that of a definite Coxeter system. 

It remains to determine when the system $(W,S)$ is indefinite. Since $(W_0, S \setminus M)$ is positive definite, this is the case precisely when the discriminant of the bilinear form $B$ of $(W,S)$ is negative (see \S \ref{subsec:Coxeter-geometric-representation}). 

\smallskip

In the following table, we record some examples of highly regular Wythoffian polytopes obtained via this strategy. In each case, the regularity degrees are computed using the sizes of the finite polytopes which appear as links of (simplicial) faces. 
Because at the last step the links split into a product, the connected regularity level of each example is one less than the regularity level indicated in the table. 
The vertex links are easily seen to be finite, and the sign of the discriminant of the bilinear form $B$ is checked by hand (or by computer).

\subsection{Table} \label{tab:Wythoffian-hyperbolic-honeycombs}
Some Wythoffian hyperbolic tessellations. 
\begin{center}
\begin{tabular}{@{}lclll@{}}
\hline
Coxeter group $W$ & Adorned diagram of $\cP_{W,M}$ & Symbol & Regularity of $X$ \\ \hline \noalign{\vskip 2pt}
$\overline{\rT}_{7}$ & 
\begin{tikzpicture}
\tikzstyle{every node}=[draw,solid,draw=black,fill=black,shape=circle,minimum size=.05cm,inner sep=.05cm]
\path[solid,draw=black,fill=white,thin] 
(0,0) node (root 1) {} --++(.5,0) node (root 2) {} --++(.5,.25) node (root 3) {} --++(-.5,.25) node (root 4) {} --++(-.5,0) node[double] (root 5) {}
(root 3)
\foreach \i in {6,...,8}
{
--++(.5,0) node (root \i) {}
};
\end{tikzpicture}
& $2_{32}$ & $(576, 35, 12, 6)$ \\
Extended $\overline{\rT}_7$ & 
\begin{tikzpicture}
\tikzstyle{every node}=[draw,solid,draw=black,fill=black,shape=circle,minimum size=.05cm,inner sep=.05cm]
\path[solid,draw=black,fill=white,thin] 
(0,0) node (root 1) {} --++(.5,0) node (root 2) {} --++(.5,.25) node (root 3) {} --++(-.5,.25) node (root 4) {} --++(-.5,0) node[double] (root 5) {}
(root 3)
\foreach \i in {6,...,9}
{
--++(.5,0) node (root \i) {}
};
\end{tikzpicture}
& $2_{42}$ & $(17280,56,15,6)$ \\
$\overline{\rT}_{8}$ & 
\begin{tikzpicture}
\tikzstyle{every node}=[draw,solid,draw=black,fill=black,shape=circle,minimum size=.05cm,inner sep=.05cm]
\path[solid,draw=black,fill=white,thin] 
(0,0) node[double] (root 1) {} --++(.5,0) node (root 3) {} --++(.5,0) node (root 4) {} --++(.5,0) node (root 5) {} --++(0,.5) node (root 2) {}
(root 5)
\foreach \i in {6,...,9}
{
--++(.5,0) node (root \i) {}
};
\end{tikzpicture}
& $3_{41}$ & $(2160, 64, 21, 10, 5)$ \\
$\rE_{m}$ ($m \geq 10$) & 
\begin{tikzpicture}
\tikzstyle{every node}=[draw,solid,draw=black,fill=black,shape=circle,minimum size=.05cm,inner sep=.05cm]
\path[solid,draw=black,fill=white,thin] 
(0,0) node[double] (root 1) {} --++(.5,0) node (root 3) {} --++(.5,0) node (root 4) {} --++(0,.5) node (root 2) {}
(root 4) --++(.5,0) node (root 5) {};
\path[dash pattern=on 2pt off 1pt,draw=black,fill=white,thin]
(root 5) --++(.5,0) node (root 6) {};
\path[solid,draw=black,fill=white,thin]
(root 6)
\foreach \i in {7,...,10}
{
--++(.5,0) node (root \i) {}
};
\end{tikzpicture}
& $2_{(m-4)1}$ & $(2^{m-2}, \frac{(m-1)(m-2)}{2}, 2(m-3), m-3)$ \\
\hline
\end{tabular}
\end{center}

\medskip

The most regular example in Table \ref{tab:Wythoffian-hyperbolic-honeycombs} is the honeycomb $3_{41}$ in hyperbolic $8$-space, whose $1$-skeleton is a $(2160,64,21,10,5)$-regular, $(2160,64,21,10)$-connected regular graph. Its links are successively the $2_{41}$ polytope with symmetry group the Coxeter group $E_8$, the $7$-demicube, the rectified $6$-simplex, the 5-cell prism, and the disjoint union of a vertex and a 3-simplex.

All the examples of Table \ref{tab:Wythoffian-hyperbolic-honeycombs} have an infinite facet. The first, the third, and the last one for $m = 10$, have facets of finite hyperbolic volume, hence their automorphism groups are noncocompact lattices in the isometry group of their respective hyperbolic space. They were discovered by Coxeter \cite{Coxeter1948}, forerun by Gosset's discovery \cite{Gosset1900} of the so-called \emph{uniform $k_{21}$ polytopes} and by Elte's subsequent work \cite{Elte1912}.

\begin{remark} \label{rem:limited-Wythoffian-examples}
Along the same lines as Remark \ref{rem:limited-regular-examples} and by going through the list of definite Coxeter diagrams carefully, one can see that the only polytopes of regularity level 4 or more that can be obtained via this method are those in Table \ref{tab:Wythoffian-hyperbolic-honeycombs} and the honeycomb from \S\ref{subsec:order-5-4-simplex-honeycomb}. 

We discuss next an example where arbitrarily high regularity is achieved, but at the cost of connectivity at the level of triangle links. 
\end{remark}

\subsection{An example with arbitrarily high regularity} \label{subsec:arbitrarily-high-regularity}

Even though in \S \ref{subsec:Wythoffian-polytopes} we required that all low-dimensional faces were simplices, this is not necessary in general. It is however obviously necessary that some faces are simplicial (otherwise the 1-skeleton contains no cliques). 

For any $m \geq 5$, let $\cP_m$ be the Wythoffian polytope associated with the diagram 
\begin{tikzpicture}[scale=0.7, transform shape]
\tikzstyle{every node}=[draw,solid,draw=black,fill=black,shape=circle,minimum size=.05cm,inner sep=.05cm]
\path[solid,draw=black,fill=white,thin] 
(0,0) node (root 1) {} --++(.5,0) node (root 2) {};
\path[dash pattern=on 2pt off 1pt,draw=black,fill=white,thin]
(root 2) --++(.5,0) node (root 3) {};
\path[solid,draw=black,fill=white,thin] 
(root 3) --++(.5,0) node (root 4) {} --++(.5,0) node (root 5) {};
\path[dash pattern=on 2pt off 1pt,draw=black,fill=white,thin]
(root 5) --++(.5,0) node (root 6) {};
\path[solid,draw=black,fill=white,thin]
(root 6) --++(.5,0) node (root 7) {};
\path[solid,draw=black,fill=white,thin]
(root 4) --++(0,.5) node[double] (root 0) {};
\draw [decorate,decoration={brace,amplitude=.1cm},xshift=0pt,yshift=.1cm]
(-0.1,0) -- node[draw=none,fill=none,shape=ellipse,above]{$m-1$} (1.1,0);
\draw [decorate,decoration={brace,amplitude=.1cm},xshift=0pt,yshift=.1cm]
(1.9,0) -- node[draw=none,fill=none,shape=ellipse,above]{$m-1$} (3.1,0);
\end{tikzpicture}
and let $X_m$ its 1-skeleton. 
This polytope has all $3$-faces tetrahedral, but some $4$-faces are 4-demicubes. 
Its vertex links are $(m-1)$-rectified $(2m-1)$-simplices, with $\binom{2m}{m}$ vertices and diagram
\begin{tikzpicture}[scale=0.7, transform shape]
\tikzstyle{every node}=[draw,solid,draw=black,fill=black,shape=circle,minimum size=.05cm,inner sep=.05cm]
\path[solid,draw=black,fill=white,thin] 
(0,0) node (root 1) {} --++(.5,0) node (root 2) {};
\path[dash pattern=on 2pt off 1pt,draw=black,fill=white,thin]
(root 2) --++(.5,0) node (root 3) {};
\path[solid,draw=black,fill=white,thin] 
(root 3) --++(.5,0) node[double] (root 4) {} --++(.5,0) node (root 5) {};
\path[dash pattern=on 2pt off 1pt,draw=black,fill=white,thin]
(root 5) --++(.5,0) node (root 6) {};
\path[solid,draw=black,fill=white,thin]
(root 6) --++(.5,0) node (root 7) {};
\draw [decorate,decoration={brace,amplitude=.1cm},xshift=0pt,yshift=.1cm]
(-0.1,0) -- node[draw=none,fill=none,shape=ellipse,above]{$m-1$} (1.1,0);
\draw [decorate,decoration={brace,amplitude=.1cm},xshift=0pt,yshift=.1cm]
(1.9,0) -- node[draw=none,fill=none,shape=ellipse,above]{$m-1$} (3.1,0);
\end{tikzpicture}, whose vertex links in turn are the Cartesian product of two $(m-1)$-simplices. 
By \S\ref{subsec:combining}, the 1-skeleton of the Cartesian product of two $(m-1)$-simplices is a $(2(m-1), m-2, m-3 \dots, 1)$-regular graph on $m^2$ vertices. 
Since all the $3$-faces of $\cP_m$ are simplicial, the 3-skeleta of $\cP_m$ and $X_m^\textrm{cl}$ coincide, hence $X_m$ is a $(\binom{2m}{m}, m^2, 2(m-1), m-2,m-3,\dots,1)$-regular graph. 

In this example, connectivity breaks down quickly since the link of an edge is a product of two polytopes (as the diagram becomes disconnected). The 1-skeleton of this link is then a Cartesian product of graphs, in which the sphere around any vertex is always disconnected. Thus $X_m$ has connected regularity level $2$. 

Note that the 1-skeleton of the $(m-1)$-rectified $(2m-1)$-simplex is just the Johnson graph $J(2m,m)$. Indeed, vertices of the $(m-1)$-rectified $(2m-1)$-simplex are placed in the center of the $(m-1)$-faces of a $(2m-1)$-simplex, hence correspond to subsets of $\{1, \dots, 2m\}$ of size $m$, with two vertices connected by an edge when the two corresponding subsets have $m-1$ elements in common. 

One can read from the diagram or deduce from the above combinatorial description that the sphere around an $i$-clique in the $1$-skeleton of $\cP_m$ is (as $i$ ranges from $1$ to $m+1$) successively : the Johnson graph $J(2m,m)$, the Cartesian product of two complete graphs $K_m$ on $m$ vertices, the disjoint union of two $K_{m-1}$, then $K_{m-2}$, $K_{m-3}$, and so on. In particular the links in $X^\textrm{cl}$ (!) of $2$-simplices are disconnected, while for $-1 \leq i \leq m$, $i \neq 2$, those of $i$-simplices are connected.

\section{Degree parameters of highly regular (expander) graphs} \label{sec:degree-parameters}

In this section we investigate the existence (or non-existence) of highly regular connected graphs with given degrees $(a_0,\dots,a_{n-1})$, and we present some suggestions for further research.

To facilitate the discussion we define the following sets of $n$-tuples of positive integers: 
\begin{itemize}[leftmargin=*]
\item $\HR(n)=\{\mathbf{a}=(a_0,\dots,a_{n-1}) \in \mathbb{N}^n$ s.t.\ there exists a connected $\mathbf{a}$-regular graph$\,\}$,
\item $\HR_{\infty}(n)=\{\mathbf{a}=(a_0,\dots,a_{n-1}) \in \mathbb{N}^n$ s.t.\ there exist up to isomorphism infinitely many connected $\mathbf{a}$-regular graphs$\,\}$,
\item $\HR_{\exp}(n)=\{\mathbf{a}=(a_0,\dots,a_{n-1}) \in \mathbb{N}^n$ s.t.\ there exists an infinite family of $\mathbf{a}$-regular expander graphs$\,\}$.
\end{itemize}

We also introduce the sets corresponding to the additional requirement of connectivity for the links of higher-dimensional cells.
\begin{itemize}[leftmargin=*]
\item $\HRC(n)=\{\mathbf{a}=(a_0,\dots,a_{n-1}) \in \mathbb{N}^n$ s.t.\ there exists a $\mathbf{a}$-connected regular graph$\,\}$,
\item $\HRC_{\infty}(n)=\{\mathbf{a}=(a_0,\dots,a_{n-1}) \in \mathbb{N}^n$ s.t.\ there exist up to isomorphism infinitely many $\mathbf{a}$-connected regular graphs$\,\}$,
\item $\HRC_{\exp}(n)=\{\mathbf{a}=(a_0,\dots,a_{n-1}) \in \mathbb{N}^n$ s.t.\ there exists an infinite family of $\mathbf{a}$-connected regular expander graphs$\,\}$.
\end{itemize}

\begin{remark}\label{case-n=1}
Note that $\HR(1) = \HRC(1) =\mathbb{N}$ (viewing a single vertex as a $0$-regular graph), $\HR_\infty(1) = \HRC_\infty(1)=\mathbb{N}\setminus \{0,1\}$ since every connected finite $2$-regular graph is a cycle, and $\HR_{\exp}(1) = \HRC_{\exp}(1)=\mathbb{N} \setminus \{0,1,2\}$ since for $k\geq 3$ there exists a family of $k$-regular expander graphs by a result of Pinsker \cite{Pinsker}.
\end{remark}

\subsection{Combining highly regular expanders}\label{subsec:combining}

Here we recall two constructions which allow us to combine existing parameter sets of level $n$ in order to create ``larger'' ones. 
These constructions together with Lemma \ref{graphproducts} show that a set of the form $\HR(n)$, $\HR_\infty(n)$ or $\HR_{\exp}(n)$ is infinite whenever it is non-empty. 

Let $G_i$ be an $(a_0^i,a_1^i,\cdots,a_n^i)$-regular graph with vertex set $V_i$, for $i=1,2$.

\smallskip\par\noindent
(1) {\bf Tensor product construction} (see also \cite{ChapmanLinialPeled2019}): The graph tensor product of $G_1$ and $G_2$ \cite{RusselWhitehead1912}, denoted by $G_1 \times G_2$,  has vertex set $V_1\times V_2$, and adjacency is defined by $(v_1,v_2) \sim (v'_1,v'_2)$ if and only if $v_1 \sim v'_1$ and $v_2 \sim v_2'$. It is straightforward to show that $G_1 \times G_2$ is $(a_0^1a_0^2,a_1^1a_1^2,\cdots,a_n^1a_n^2)$-regular.

\smallskip\par\noindent
(2) {\bf Cartesian product construction}: The Cartesian product of $G_1$ and $G_2$ (see e.g. \cite{Sabidussi1960}), denoted by $G_1 \square G_2$, has vertex set $V_1\times V_2$, and adjacency is defined by $(v_1,v_2) \sim (v'_1,v'_2)$ if and only if 
either $v_1 = v'_1$ and $v_2 \sim v_2'$ or $v_1\sim v_1'$ and $v_2=v'_2$. It is easy to check that if $a_k^1=a_k^2=:a_k$ for $k\in \{1,\dots,n\}$, then $G_1 \square G_2$ is 
$(a_0^1+a_0^2,a_1,\cdots,a_n)$-regular. 

\begin{remark}\label{Cheeger-Buser}
The Cheeger-Buser inequalities (\cite{Dodziuk1984,Alon-Milman-1984} show that a family of connected $k$-regular graphs $(G_n)_{n\in \NN}$ is a family of expanders if and only if their adjacency matrices $(A_n)_{n\in \NN}$ possess a uniform spectral gap $s = k-\lambda_{2,n}>0$. Here $\lambda_{2,n}$ denotes the second largest eigenvalue of $A_n$.
\end{remark}   

\begin{lemma}\label{graphproducts}
If $\mathcal{G}^1 = (G^1_i)_{i\in I}$ and $\mathcal{G}^2 = (G^2_i)_{i\in I}$ form a family of expander graphs, then so do $\mathcal{G}^1 \times \mathcal{G}^2=(G^1_{i} \times G^2_{i})_{i\in I}$ and 
$\mathcal{G}^1 \square \mathcal{G}^2 = (G^1_{i} \square G^2_{i})_{i\in I}$.
\end{lemma}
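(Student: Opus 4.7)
The plan is to translate the expansion hypothesis into a spectral statement via the Cheeger--Buser inequalities (Remark~\ref{Cheeger-Buser}) and then compute the spectra of the two graph products in terms of the spectra of their factors. Under the bounded-degree assumption, the Cheeger constant of a family of regular graphs is uniformly bounded below if and only if the spectral gap $k - \lambda_2$ of the adjacency matrix is. Let $s > 0$ be such a common lower bound on the spectral gap for $\mathcal{G}^1$ and $\mathcal{G}^2$, and let $k^\ell_i$ denote the (uniformly bounded) degree of $G^\ell_i$.

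For the Cartesian product, the first step is to exploit the identity
\[
A(G^1_i \square G^2_i) = A(G^1_i) \otimes I + I \otimes A(G^2_i),
\]
which shows that the eigenvalues of $A(G^1_i \square G^2_i)$ are the sums $\lambda + \mu$ of eigenvalues of the two factors, realized on tensor products of corresponding eigenvectors. The top eigenvalue is $k^1_i + k^2_i$ and the next largest is $\max(\lambda_2(G^1_i) + k^2_i,\, k^1_i + \lambda_2(G^2_i))$; hence the spectral gap of $G^1_i \square G^2_i$ equals $\min(k^1_i - \lambda_2(G^1_i),\, k^2_i - \lambda_2(G^2_i)) \geq s$. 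Cheeger--Buser in the reverse direction then yields the desired uniform Cheeger bound.

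For the tensor product, one has instead $A(G^1_i \times G^2_i) = A(G^1_i) \otimes A(G^2_i)$, whose eigenvalues are the products $\lambda \mu$. The top eigenvalue is $k^1_i k^2_i$, and its separation from the second-largest eigenvalue is controlled by the three quantities $k^2_i(k^1_i - \lambda_2(G^1_i))$, $k^1_i(k^2_i - \lambda_2(G^2_i))$, and $k^1_i k^2_i - |\lambda_{\min}(G^1_i)|\cdot |\lambda_{\min}(G^2_i)|$. The first two are uniformly bounded below by $s$ times a degree; the third is handled by the standard fact that a family of connected expanders (in the Cheeger sense) also enjoys a uniform lower bound on $k^\ell_i - |\lambda_{\min}(G^\ell_i)|$, up to a routine reduction such as passing to bipartite double covers if either factor is itself bipartite. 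A second invocation of Cheeger--Buser concludes.

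The main obstacle is precisely this last point in the tensor-product analysis: unlike in the Cartesian case, where sign issues do not arise, the multiplicativity of eigenvalues forces one to separately rule out the \emph{nearly bipartite} scenario, in which the most negative eigenvalues of the two factors can multiply to something close to $k^1_i k^2_i$ and thereby collapse the spectral gap, even when the one-sided Cheeger gaps of both factors are uniformly bounded below.
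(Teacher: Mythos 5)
Your overall route --- translate expansion into a uniform spectral gap via Cheeger--Buser and then read off the spectra of the two products from $A_1\otimes I + I\otimes A_2$ and $A_1\otimes A_2$ --- is exactly the paper's, and your treatment of the Cartesian product is complete and correct: the gap of $G^1_i\,\square\,G^2_i$ is $\min\bigl(k^1_i-\lambda_2(G^1_i),\,k^2_i-\lambda_2(G^2_i)\bigr)$ and everything goes through.

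The tensor-product half, however, has a genuine gap, and it sits precisely at the point you flagged. The ``standard fact'' you invoke --- that a uniform Cheeger lower bound for a family of connected regular graphs forces a uniform lower bound on $k^\ell_i - |\lambda_{\min}(G^\ell_i)|$ --- is false: a family of connected \emph{bipartite} expanders (e.g.\ bipartite Ramanujan graphs) has excellent Cheeger constants yet $\lambda_{\min}=-k$ identically. The one-sided gap $k-\lambda_2$ controlled by Cheeger--Buser says nothing about the bottom of the spectrum. Nor does ``passing to bipartite double covers'' repair this, since that replaces the graphs whose product you must analyse by different graphs. Indeed the obstruction is not merely in the proof: if both families are bipartite, then by Weichsel's theorem (which the paper itself uses in Lemma~\ref{lemma:HRCproducts}) each $G^1_i\times G^2_i$ is disconnected, has Cheeger constant $0$, and the tensor-product conclusion simply fails, so no argument can close the gap without an extra hypothesis ruling out (near-)bipartiteness of at least one factor. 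To be fair, the paper's own published proof is terser than yours and silently asserts that a uniform gap $k-\lambda_2$ passes to the product of eigenvalues, so it overlooks the very issue you correctly diagnosed; in the paper's intended applications the graphs are highly regular with $a_1\neq 0$, hence contain triangles and are not bipartite, but even then one needs a quantitative bound keeping $|\lambda_{\min}|$ away from $k$ for at least one factor, and neither your argument nor the paper's supplies it.
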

\begin{proof} We use Remark \ref{Cheeger-Buser}. Let $A_i$ be the adjacency matrix of $G_i$, for $i=1,2$. The eigenvalues of the adjacency matrix $A_1 \otimes A_2$ of the tensor product $G_1 \times G_2$ \cite{RusselWhitehead1912} are the pairwise products of the eigenvalues of $A_1$ and $A_2$. The adjacency matrix $A_{1\square 2}$ of $G_1 \square G_2$ is the Kronecker sum of $A_1$ and $A_2$, namely $A_{1\square 2} = A_1 \otimes I_{n_2} + I_{n_1} \otimes A_2$. The eigenvalues of $A_{1\square 2}$ are all of the form $\lambda_1+\lambda_2$ where $\lambda_i$ is an eigenvalue of $A_i$ for $i = 1,2$ (see \cite[Theorem 4.4.5]{HornJohnson1991}). Hence if the families $\mathcal{G}^1$ and $\mathcal{G}^2$ have uniform spectral gap, then so do $\mathcal{G}^1 \times \mathcal{G}^2$
and $\mathcal{G}^1 \square \mathcal{G}^2$, and the lemma is proved.
\end{proof}

We now discuss the behavior of $\mathbf{a}$-connected regularity under the above graph products. Note that the vertex link of a Cartesian product is never connected, so we only consider tensor products. 

\begin{lemma}\label{lemma:HRCproducts}
Let $n\geq 3$. If $G_1$ and $G_2$ are connected regular of level $n$,
then $G_1\times G_2$ is connected regular of level $n-2$. 
\end{lemma}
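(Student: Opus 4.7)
The plan is to reduce the connectivity of spheres in $G_1\times G_2$ to \emph{Weichsel's theorem}: the tensor product of two connected graphs is connected if and only if at least one of them contains an odd cycle.

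Write $(a_0^j,\dots,a_{n-1}^j)$ for the regularity of $G_j$ ($j=1,2$). First I would observe that a clique $C$ of size $k$ in $G_1\times G_2$ projects to cliques $C_1\subseteq G_1$ and $C_2\subseteq G_2$, both of size $k$; otherwise two vertices of $C$ would share a coordinate in some factor, contradicting non-self-adjacency there. Unravelling the adjacency rule of the tensor product then yields the graph-theoretic identity
\[
\Sph_{G_1\times G_2}(C)\;=\;\Sph_{G_1}(C_1)\times\Sph_{G_2}(C_2),
\]
where the right-hand side is again a tensor product of graphs. Iterating this identity on spheres around single vertices recovers the regularity mentioned in construction (1): $G_1\times G_2$ is $(a_0^1 a_0^2,\dots,a_{n-1}^1 a_{n-1}^2)$-regular.

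Now fix an integer $i$ with $0\leq i\leq n-3$ and any $i$-clique $C$ in $G_1\times G_2$, with projections $C_j$. Since $G_j$ is connected regular of level $n$ and $i\leq n-1$, each factor sphere $\Sph_{G_j}(C_j)$ is connected. It moreover inherits the regularity $(a_i^j,a_{i+1}^j,\dots,a_{n-1}^j)$ from $G_j$; because $i+1\leq n-2\leq n-1$, the entry $a_{i+1}^j$ is strictly positive, so the neighbourhood of any vertex $v$ inside $\Sph_{G_j}(C_j)$ contains an edge $\{u,w\}$, producing the triangle $\{u,v,w\}$. Hence both factor spheres are connected and non-bipartite, and Weichsel's theorem guarantees that $\Sph_{G_1\times G_2}(C)$ is connected. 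Together with the regularity computation (in particular $a_{n-3}^1 a_{n-3}^2>0$), this shows that $G_1\times G_2$ is connected regular of level $n-2$.

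The only delicate step is verifying non-bipartiteness when applying Weichsel's theorem: as $i$ grows, the inherited regularity tuple of the factor sphere shortens, so by the time one reaches cliques of size $n-1$ the sphere is described only by its own degree $a_{n-1}^j$ and could well be bipartite. That loss of triangles in the factor links is what prevents the argument from reaching higher-dimensional cliques and forces the stated drop in the connected regularity level.
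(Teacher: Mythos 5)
Your proof is correct and takes essentially the same route as the paper: the sphere around a clique in $G_1\times G_2$ factors as the tensor product of the factor spheres, and Weichsel's theorem applies because those factor spheres are connected and contain triangles; you merely supply the details (the clique-projection argument and the triangle verification) that the paper's three-line proof leaves implicit. One small remark on your closing paragraph: the same argument in fact also covers $i=n-2$, since the factor spheres around $(n-2)$-cliques are $(a_{n-2}^j,a_{n-1}^j)$-regular with $a_{n-1}^j\neq 0$ and hence still contain triangles, so the genuine obstruction only appears at $i=n-1$ and the method actually yields connected regularity of level $n-1$; the stated level $n-2$ is simply a safe under-claim, not the sharp output of the argument.
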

\begin{proof}
Note that the $j$-links of the tensor product of two graphs are the tensor products of the $j$-links of those graphs. 
By a result of Weichsel \cite{Weichsel-1962}, the tensor product of two connected graphs is connected if and only if at least one of them contains an odd cycle. 
So for given $j$, as long as (at least one of the) $j$-links contain a triangle, the $j$-link of the tensor product will be connected. 
\end{proof}

\subsection{Restrictions on the regularity degrees}\label{subsec:restrictions}
It would be interesting to describe the sets $\HR(n)$, $\HR_{\infty}(n)$ and $\HR_{\exp}(n)$, as well as their connected counterparts $\HR(n)$, $\HR_{\infty}(n)$ and $\HR_{\exp}(n)$,  precisely. 

Apart from the obvious bounds $a_i > a_{i+1}$ and the requirement that the product of any $k$ consecutive $a_i$'s must be divisible by $k!$, we do not know any necessary condition for a tuple $(a_0, \dots, a_{n-1})$ to be contained in one of these sets. 
For one thing, the constructions above show that one cannot bound one of the $a_i$ in terms of its successors. 

Zelinka \cite{Zelinka2000} showed by ad hoc methods that there exist no $(7,4)$-regular graphs. 
Note also that strongly regular graphs form a subclass of $(a,b)$-regular graphs, and it is a well-known open problem to determine the allowable parameters for strongly regular graphs. 
For example, if $G$ is a graph in which every edge is in a unique triangle, and every non-edge is a diagonal of a unique 4-cycle, then it is a relatively easy  exercise to show that $|G| \in \{3, 9, 99, 243, 6273, 494019\}$. Moreover, examples of such graphs for $|G| = 3,9$ and $243$ are given by a triangle, a toroidal 3-by-3 grid and a very interesting example related to the ternary Golay code \cite{BerlekampVanLintSeidel1973}. But is there an example with $|G| = 99\,$? Conway offered US\$1000 for an answer to this question, which is now known as `Conway's 99-graph problem' (although in fact its history goes back to Biggs in 1969).

\subsection{Graphs of arbitrarily high connected regularity level}\label{arbitrarylevel}
Here we provide two group-theoretic constructions which show that $\HRC(n)$ is infinite for all $n$. The connected regularity in both constructions follows from Lemma \ref{lem:higher-regularity}. 

\smallskip \par\noindent
{\bf Example 1:} 
The rank $n$ Coxeter group $[3,3,..,3,2]$ is finite, of order $2n!$, and isomorphic to the direct product $S_n \times C_2$. 
Now let $x_1,\dots x_n$ be the canonical involutory generators of the 
rank $n$ Coxeter group $G=[3,3,..,3,\infty]$, and let $U$ be the quotient of $G$ obtained by adjoining the single extra relation 
$(x_{n-2}x_{n-1}x_{n})^6 = 1$.  
The latter relation is equivalent to $[w,w^{x_{n-2}}] = 1$, where $w = (x_{n-1}x_{n})^2$, 
because if $(a,b,c) = (x_{n-2},x_{n-1},x_{n})$ then 
$$[(bc)^2,a(bc)^{2}a] = cbc(bca)^{4}bcbca = cbc(bca)^{6}(acb)^{2}bcbca = cbc(bca)^{6}cababac = cbc(bca)^{6}cbc.$$

Now let $\,w_2 = w^{x_{n-2}}$, $\,w_3 = w^{x_{n-2}x_{n-3}}$, and so on, up to $\,w_{n-1} = w^{x_{n-2}x_{n-3}\dots x_1}$. 
Then it is an easy exercise using the Coxeter group relations to show that the effect of conjugation of these $w_j$ by 
the generators $x_1,\dots x_n$ of $U$ is as follows:

\medskip\noindent
\hskip 36pt  $x_i$ interchanges $w_{n-1-i}$ with $w_{n-i}$, and centralises all other $w_j$, for $1 \le i \le n-2$; 

\smallskip\noindent
\hskip 36pt  $x_{n-1}$ inverts $w_1$, and interchanges  $w_{j}$ with $w_{1}^{-1}w_{j}$, for $2 \le j \le n-1$; and  

\smallskip\noindent
\hskip 36pt  $x_{n}$ inverts $w_{j}$ for all $j$. 

\medskip\noindent
Hence the elements $w_j$ generate a normal subgroup $N$ of $U$, with quotient $U/N \cong S_n \times C_2$. 
Also $[w_1,w_2] = [w,w^{x_{n-2}}] = 1$, and repeated conjugation of this by the generators $x_i$ show that 
every two of the $w_i$ commute, and therefore $N$ is abelian. 

Moreover, the effect of conjugation of the $w_j$ by the subgroup of $U$ generated by $x_1,\dots x_{n-1}$  
is equivalent to the action of $S_n$ on its $(n-1)$-dimensional augmentation module (consisting of all $n$-vectors 
$(v_1,v_2,\dots,v_n)$ with vector-sum $0$), and as this module is irreducible over $\R$ (and hence over $\mathbb{Q}$), 
it follows that $N$ is free abelian of rank $n-1$. 

Next, for any positive integer $k$, factor out the characteristic subgroup $N_k$ 
of $N$ generated by the $k$-th powers of $w = (x_{n-1}x_{n})^2$ and their conjugates. 
Then the resulting finite quotient $U/N_k$ of $G$ has order $2(n!)k^{n-1},$ and is the automorphism group of 
a regular polytope of rank $n$ with type $[3,3,..,3,2k]$, 
and gives rise to a highly regular graph of level $n-1$ with parameters $(nk,(n-1)k,(n-2)k,\dots,3k,2k)$.

\smallskip\par\noindent
{\bf Example 2:} Conder, Hubard and O'Reilly-Regueiro \cite{CHO2020} recently devised a construction in order to produce the first concrete examples of chiral (but otherwise maximally symmetric) polytopes of arbitrarily large rank, showing also that for every integer $n \ge 5$, all but finitely many of the alternating groups $A_k$ and symmetric groups $S_k$ are the automorphism groups of regular polytopes of rank $n$ and type $[3,3,..,3,m]$ for some $m$ (dependent on $k$ and $n$), with simplicial facets. 
(This can be achieved by constructing suitable homomorphisms from the rank $n$ Coxeter group $[3,3,\dots,3,\infty]$ onto $A_n$ and $S_n$ for all sufficiently large $n$.) 
The parameters $(a_0,\dots,a_{n-2})$ for the resulting highly regular graphs, however, involve very large integers and reveal no obvious recurring patterns. 

\subsection{Related work by Friedgut and Iluz} \label{subsec:related}
During the write-up of this paper it was brought to the authors' attention that Friedgut and Iluz, in work in preparation, have obtained related results. They observed that the Coxeter group $\rH_5$ leads to the construction of $(120,12,5,2)$-regular graphs, and Friedgut had presented this work at Oberwolfach in April 2019, but with no mention of the expansion of those graphs. They also informed us they have a method to show that $\HRC_{\infty}(n)$ and even $\HRC_{\exp}(n)$ are infinite (compare with Theorem \ref{thm:levels}(c)).

\subsection{Open problems}\label{subsec:problems}
We conclude with the following natural problems:

\medskip\par\noindent
{\bf Problem A:} Consider the following diagram of inclusions: \medskip
\begin{center}
\begin{tikzcd}
   \HRC_{\exp}(n) \arrow[r,hook] \arrow[d,hook]
    & \HRC_{\infty}(n) \arrow[r,hook] \arrow[d,hook] & \HRC(n) \arrow[d,hook] \\
  \HR_{\exp}(n) \arrow[r,hook] & \HR_{\infty}(n) \arrow[r,hook] & \HR(n) 
\end{tikzcd}
\medskip
\end{center}
Are any of these inclusions strict for $n>1$? (See Remark \ref{case-n=1} for the case $n=1$.)

\medskip\par\noindent
{\bf Problem B:} For $n>1$ describe the above six sets as subsets of $\mathbb{N}^n$. 

\section{Dedication to John Conway and Ernest Vinberg}\label{sec:CV}

This paper is dedicated to John Conway and Ernest Vinberg, for their phenomenal insights and outstanding contributions 
in the fields of algebra, combinatorics and geometry.   Both of them died in 2020, casualties of the Covid-19 virus. 
Their work has been inspirational to us and to hundreds of other mathematicians worldwide. 

John Conway is perhaps best known for his contributions to combinatorial game theory, especially the `Game of Life', and for the discovery of three of the sporadic finite simple groups. But he also made fundamental discoveries across a very wide range of other topics, including knots, lattices, numbers, polyhedra and tilings. 
Ernest Vinberg is best known for his work on discrete subgroups of Lie groups and representation theory. 
He introduced Vinberg's algorithm for finding a fundamental domain of a hyperbolic reflection group, and he developed some beautiful theory of the arithmetic nature of co-finite hyperbolic Coxeter groups and the combinatorial-metric structure of their Coxeter polyhedra in terms of the Gram matrix. 
(Also incidentally, Conway was a great admirer of Coxeter, whose groups play a key role in this paper, 
and he used Vinberg's algorithm to describe the automorphism group of the 26-dimensional even unimodular Lorentzian 
lattice II$_{25,1}$ in terms of the Leech lattice.)  

Conway had a life-long interest in highly symmetric objects, and Vinberg made great contributions to the theory and applications of Coxeter groups. This paper which combines these two threads of their research serves as a tribute to them both. 

\bibliographystyle{alpha}
\bibliography{bibliography}

\end{document}